\definecolor{db}{RGB}{0, 0, 130}
\definecolor{rp}{rgb}{0.25, 0, 0.75}
\definecolor{dg}{rgb}{0, 0.6, 0}
\newtheorem{theorem}{Theorem}[section]
\newtheorem{definition}{Definition}[section]
\newtheorem{corollary}[definition]{Corollary}
\newtheorem{assumption}[theorem]{Assumption}
\newtheorem{lemma}[definition]{Lemma}
\newtheorem{proposition}[definition]{Proposition}
\newtheorem{remark}[definition]{Remark}
\def\1{\mathbbm{1}}
\def\K{\mathbb{K}}
\def\R{\mathbb{R}}
\def\E{\mathbb{E}}
\def\N{\mathbb{N}}
\def\x{\times}
\def\Om{\Omega}
\def\Fc{\mathcal{F}}
\def\F{\mathbb{F}}
\def\P{\mathbb{P}}
\def\gammab{\bar{\gamma}}
\def\eps{\varepsilon}
\def\Tc{\mathcal{T}}
\def\Gc{\mathcal{G}}
\def\Lc{\mathcal{L}}
\def\Pc{\mathcal{P}}
\def\Wc{\mathcal{W}}
\def\Mc{\mathcal{M}}
\def\Hc{\mathcal{H}}
\def\Xk{X^k}
\def\Xmuk{X^{1,k}}
\def\Xnuk{X^{2,k}}
\def\Wk{W^k}
\def\Qk{Q^k}
\def\<{\langle}
\def\>{\rangle}
\DeclareMathOperator{\Tr}{Tr}
\def\gammab{\bar \gamma}
\def\Ft{\tilde \F}
\def\Zt{\tilde Z}
\def\Ac{\mathcal{A}}
\def\As{\mathsf{A}}
\def\as{\mathsf{a}}
\def\Kt{\tilde{K}}
\def\Xt{\tilde{X}}
\def\Zt{\tilde{Z}}
\def\Pt{\tilde{\P}}
\def\ie{\textit{i.e.}}
\def\eg{\textit{e.g.}}
\DeclareMathOperator*{\argmin}{arg\,min}
\title{Optimal Control of McKean-Vlasov Branching Diffusion Processes}
\author{
	Julien Claisse \thanks{Universit\'e Paris-Dauphine, PSL University, CNRS, CEREMADE, Paris. claisse@ceremade.dauphine.fr}
	\and Jiazhi Kang\thanks{Department of Mathematics, The Chinese University of Hong Kong. jzkang@math.cuhk.edu.hk}
	\and Tianxu Lan\thanks{Department of Mathematics, The Chinese University of Hong Kong. txlan@math.cuhk.edu.hk}
        \and Xiaolu Tan\thanks{Department of Mathematics, The Chinese University of Hong Kong. xiaolu.tan@cuhk.edu.hk, Research supported
by Hong Kong RGC General Research Fund (projects 14302921).}
}
\date{\today}
\begin{document}

\maketitle

	We study an optimal control problem on McKean-Vlasov branching diffusion processes, in which the interaction term is determined by the marginal measure induced by all alive particles in the system. Accordingly, the value function is defined on the space of finite nonnegative measures over the Euclidean space.
	Within the framework of Lipschitz continuous closed-loop controls, and by using  uniqueness of solution to the associated nonlinear Fokker--Planck equation,
	we establish the dynamic programming principle.
	Further, under regularity assumptions, we show that the value function satisfies a Hamilton--Jacobi--Bellman master equation defined on the space of finite nonnegative measures.
	We next provide a corresponding verification theorem, which provides an optimal strategy under stringent assumption.
	Finally, we study a linear--quadratic example, for which explicit solutions are derived in terms of Riccati-type equations.

\section{Introduction}

	The optimal control problem for McKean-Vlasov (or mean-field) dynamics has received significant attention in recent years.
	The McKean-Vlasov stochastic differential equations (SDEs) are motivated by the modelling of the limiting behaviour of large population interacting particle systems as the population size tends to infinity (see, e.g., \cite{Mckean1967, Sznitman 1991, Dawson Vaillancourt 1995}).
	In a similar spirit, the McKean–Vlasov optimal control problems arise as the mean-field limits of optimal control problems for large population systems (see, e.g., Carmona and Delarue \cite{CarmonaDelarue}).
	For this new variation of the optimal control problem, both the dynamic programming and the maximum principle approaches have been developed (see, e.g., Carmona and Delarue \cite{CarmonaDelarue2015}, Pham and Wei \cite{Pham Wei 2018}, etc.), and explicit solutions have also been obtained in the linear–quadratic setting (see, e.g., Yong \cite{Yong2013}, etc.).
	
	\vspace{0.5em}

	While classical McKean–Vlasov dynamics typically describe populations of constant size over the time horizon, one may also incorporate population-size dynamics through branching mechanisms.
	Within the framework of stochastic differential games, a mean-field game with branching was studied by Claisse, Ren and Tan \cite{Claisse Ren Tan 2019}.
	The McKean–Vlasov dynamics combined with birth–death processes have also been investigated by Fontbona and M\'el\'eard \cite{Fontbona1}, Fontbona and Muñoz-Hernández \cite{Fontbona2}.
	More recently, in Claisse, Kang and Tan \cite{Claisse Kang Tan 2024}, we studied a more general class of McKean–Vlasov branching SDEs, establishing well-posedness results together with the propagation of chaos property.
	In another recent work Cao, Ren and Tan \cite{CaoRenTan}, a quantitative weak propagation of chaos result has also been obtained.

	\vspace{0.5em}

	For branching diffusion processes without interaction, the associated optimal control problems have also been explored in the literature.
	The study dates back to Nisio \cite{Nisio 1985}, who applied a controlled semigroup approach.
	Recently, Claisse \cite{Claisse 2018} investigated the optimal control of branching processes via a dynamic programming approach, with particular attention to the associated Hamilton–Jacobi–Bellman (Hamilton-Jacobi-Bellman) equation.
	More recently, a target control problem of the branching processes has been studied by Kharroubi and Ocello \cite{KharroubiOcello}.
	Let us also mention the paper of Hambly and Jettkant \cite{HamblyJ} which studies the maximum principle of a controlled Fokker--Planck equation problem,
	which corresponds to the branching diffusion processes.

	\vspace{0.5em}

	The main objective of this paper is to study the optimal control problem for McKean–Vlasov branching diffusion processes, building upon our well-posedness results established in \cite{Claisse Kang Tan 2024} and by using a dynamic programming approach.
	In contrast to the branching diffusion model with path-dependent coefficients studied in \cite{Claisse Kang Tan 2024}, we focus here on the Markovian setting, where the coefficients depend on both the particle position and the marginal measure induced by all alive particles.
	As a consequence, the value function of the control problem is defined as a functional on the space of finite nonnegative measures on $\R^d$.
	Although distinct particle-tree configurations may induce the same marginal finite measure on $\R^d$, we show that they nevertheless yield the same cost value by using a uniqueness condition of solutions to the associated nonlinear Fokker–Planck equation.
	We also provide sufficient conditions (and a methodology) to establish the existence and uniqueness of this nonlinear Fokker–Planck equation,
	which should be of independent interest.

	\vspace{0.5em}

	Next, following Pham and Wei \cite{Pham Wei 2018}, we introduce a closed-loop control formulation, where admissible controls are Lipschitz functionals of the particle positions.
	This framework enables the application of the well-posedness results for McKean–Vlasov branching SDEs obtained in \cite{Claisse Kang Tan 2024}, from which we deduce the dynamic programming principle (DPP).
	In our setting without common noise, the marginal measures induced by the branching diffusion process evolve as a deterministic flow of measures, so that the DPP takes a particularly tractable form.

	\vspace{0.5em}

	Under suitable smoothness assumptions on the value function, and by using a standard extension of It\^o’s formula along the flow of measures induced by branching diffusion processes, one checks that the value function provides a classical solution to a Hamilton–Jacobi–Bellman master equation on the space of finite nonnegative measures.
	Conversely, we establish a verification theorem, which ensures that the value function as well as the optimal closed-loop control can be recovered from a smooth solution to the Hamilton-Jacobi-Bellman master equation.
	Notice also that a notion of viscosity solution for this class of master equation is also developed in Ekren, He, Lan and Tan \cite{EkrenHLT}.  

	\vspace{0.5em}

	Finally, we specialize to the linear–quadratic setting and derive an explicit solution to the Hamilton-Jacobi-Bellman master equation based on a Riccati-type ordinary differential equation (ODE).
	By applying the verification theorem, we confirm that this explicit solution coincides with the value function of the controlled branching diffusion problem and obtain the corresponding optimal control.

	\vspace{0.5em}

	The remainder of the paper is organized as follows.
	In Section~\ref{sec:prel}, we provide some preliminary definitions of spaces and the associated metrics.
	Then we introduce in Section~\ref{sec:Formulation} the controlled McKean--Vlasov branching SDE and derive several a priori estimates as well as an invariance principle.
	Next, in Section~\ref{sec: control problem}, we formulate a closed-loop control problem whose value function is defined on the space of finite nonnegative measures, and then establish the dynamic programming principle and derive the corresponding Hamilton-Jacobi-Bellman master equation.
	Section~\ref{sec: LQ} focuses on the linear–quadratic case, where we obtain an explicit representation of the solution via a Riccati-type equation.
	Finally, in Section~\ref{sec:complement}, we provide some complements results on the McKean-Vlasov branching SDEs which are used in the paper.

\section{Preliminaries and Notations}
\label{sec:prel}
\subsection{Space of Measures}
\label{sec:measures}

	Let $(X, \rho)$ be a non-empty Polish space. 
	We denote by $\Pc(X)$ (resp. $\Mc(X)$) the space of all Borel probability (resp. finite nonnegative) measures on $X$.  
	For $\mu \in \Mc(X)$ and a $\mu$-integrable function $f : X \longrightarrow \R$, we denote the integral of $f$ with respect to $\mu$ by
	\begin{equation*}
	\langle \mu, f \rangle ~:= \int_X f(x)\, \mu(dx).
	\end{equation*}
	Equipped with the weak topology, $\Mc(X)$ inherits the property of being a Polish space from $X$. 
	A compatible metric is the bounded Lipschitz distance defined by
	\begin{equation*}
		\mathbf{d}_{\mathrm{BL}} \big(\mu_1, \mu_2 \big)
		~:=~
		\sup_{\varphi\in \mathrm{BL}_1(X)}\big\{\<\mu_1,\varphi\> - \<\mu_2,\varphi\>\big\},
	\end{equation*}
	where
	\begin{align*}
		\text{BL}_1(X)
		~:=~
		\Big\{
			\varphi:X\rightarrow\R~:~
			~\big|\varphi(x) - \varphi(y)\big| \leq |x-y|~\text{and}~\big|\varphi(x)\big| \le 1,
			~\mbox{for all}~ x,y\in X
		\Big\}.
	\end{align*}
	See, \eg, Bogachev~\cite[Theorem~8.3.2]{Bogachev07}.  
	
For $p \ge 1$, we denote by $\Pc_p(X)$ the space of probability measures on $X$ with finite $p$-th moment, \ie,
	\[
	\Pc_p(X) ~:=~ \Big\{ \mu \in \Pc(X) : \int_X \rho(x, x_0)^p\, \mu(dx) < +\infty \Big\},
	\]
	for some (and hence for all) fixed $x_0 \in X$.  
	It is equipped with the Wasserstein distance $\Wc_p$ defined by
	\[
	\Wc_p(\mu, \nu)
	~:=~ \inf_{\lambda \in \Lambda(\mu, \nu)} 
	\bigg( \int_{X \times X} \rho(x, y)^p\, \lambda(dx, dy) \bigg)^{1/p},
	\]
	where $\Lambda(\mu, \nu)$ is the collection of probability measures on $X \times X$ with marginals $\mu$ and $\nu$.  
	
		We denote further by $\Mc_p(X)$ the space of finite measures on $X$ with finite $p$-th moment, \ie,
	\[
		\Mc_p(X) ~:=~ \Big\{ \mu \in \Mc(X) : \int_X \rho(x, x_0)^p\, \mu(dx) < +\infty \Big\}.
	\]
	Following \cite[Appendix~B]{Claisse Ren Tan 2019}, we introduce an extension of the Wasserstein metric on $\Mc_p(X).$ 
	Let $\bar X := X \cup \{\partial\}$ be an extension of $X$ with cemetery point $\partial$.  
	Define $\rho(x, \partial) := \rho(x, x_0) + 1$ for $x \in X$, so that $(\bar X, \rho)$ is still a metric space.  
    Let us denote next, for all $\bar\mu, \bar\nu \in \Mc_p(\bar X)$ such that $\bar \mu(\bar X) = \bar\nu(\bar X),$
	\[
	\bar \Wc_{p}(\bar \mu, \bar \nu)
	~:=~ \inf_{\bar\lambda \in \bar\Lambda( \bar \mu, \bar \nu)}
	\bigg( \int_{\bar X \times \bar X} \rho(x, y)^p\, \bar\lambda(dx, dy) \bigg)^{1/p},
	\]
	where $\bar\Lambda(\bar \mu, \bar \nu)$ denotes the set of finite measures on $\bar X \times \bar X$ with marginals $\bar\mu$ and $\bar \nu$.  
	Then, for $\mu, \nu \in \Mc_p(X)$, we can define the extended Wasserstein distance by
	\[
	\Wc_p(\mu, \nu)
	~:=~ \bar\Wc_{p}(\bar\mu_m, \bar\nu_m),  \quad \text{for any } m \ge \mu(X) \vee \nu(X),
	\]
	where 
	\[
	\bar\mu_m := \mu(\cdot \cap X) + (m - \mu(X))\, \delta_\partial(\cdot),
	\qquad
	\bar\nu_m := \nu(\cdot \cap X) + (m - \nu(X))\, \delta_\partial(\cdot).
	\]
	Notice that $\Wc_p$ is independent of the constant $m$ by definition.
	Additionally, it follows by arguments identical to~\cite[Lemma~B.2]{Claisse Ren Tan 2019} that a sequence $(\mu_n)_{n\in\N}$ converges to $\mu$ in  $\Mc_p(X)$  if and only if, for all $\phi: X \to \R$ continuous satisfying $|\phi(x)| \le C(1+\rho(x_0,x)^p)$,
		\begin{equation*}
			\int_{X} {\phi(x) \,\mu_n(dx)} ~\xrightarrow[n\to\infty]{}~ \int_{X} {\phi(x) \,\mu(dx)}.
		\end{equation*}

	\subsection{Space of Particles}
	
	 To describe the genealogy of the branching process, we use the classical Ulam-Harris-Neveu notation and we denote the set of labels by
		$$
			\K:= \{\emptyset\}\cup\bigcup_{n=1}^{+\infty}\N^n.
		$$
		Given $k, k'\in\K$ with $k=k_1...k_n$ and $k'=k'_1...k'_m$, we define the concatenation  $kk':= k_1...k_nk'_1...k'_m$ and the partial order $k\prec k'$ if there exists $\tilde{k}\in\K$ such that $k'=k\tilde{k},$ which means that $k'$ is a descendant of $k.$ 
		The label $\emptyset$ corresponds to the common ancestor of the population akin to the root of the tree.
		
		Next we introduce the state space of branching diffusion processes as
	\[
	E 
	~:=~
	\bigg\{
	\sum_{k \in K} \delta_{(k, x^k)} : K\subset \K \text{ finite},~
	x^k \in \R^d, ~ k \nprec k',~
	\text{for all } k, k' \in K
	\bigg\},
	\]
	where the Dirac measure $\delta_{(k, x^k)}$ corresponds to a particle identified by a label $k$ and a position $x^k.$
	Motivated by the study of superprocesses, it is now common to represent branching diffusions as measure-valued processes, see, \eg, Dawson \cite{Dawson 1993}.
  For a measurable function $f = (f^k)_{k \in \K} : \K \times \R^d \to \R,$ we observe that
	\[
	\langle e, f \rangle
	~=~
	\sum_{k \in K} f^k(x^k), \quad \text{for all } e = \sum_{k \in K} \delta_{(k, x^k)} \in E.
	\]
Notice that the space $E$ is a closed subset of $\Mc(\K \times \R^d)$ under the topology of weak convergence, and thus $E$ is also a Polish space. 
	
	We then introduce a metric $d_E$ on $E$ compatible with the weak  topology:  for any $e_1, e_2 \in E$ such that $e_1 = \sum_{k \in K_1} \delta_{(k, x^k)}$ and $e_2 = \sum_{k \in K_2} \delta_{(k, y^k)},$
	\begin{equation} \label{eq:def_d_E}
	d_E(e_1, e_2)
	~:=~
	\#(K_1 \triangle K_2)
	~+~
	\sum_{k \in K_1 \cap K_2} \big(|x^k - y^k| \wedge 1 \big) ,
	\end{equation}
	where $\#(K_1 \triangle K_2)$ denotes the cardinality of $K_1 \triangle K_2:= (K_1 \setminus K_2) \cup (K_2 \setminus K_1).$ 
	The corresponding space of probability distribution on $E$ with finite first moment is given by 
		\begin{equation*}
			\Pc_1(E) 
			=
			\big\{ 
				\nu \in \Pc(E) ~:  \int_{E} \langle e,1{}\rangle \,\nu(d e) < \infty 
			\big\}.	
		\end{equation*}
	This is consistent with the definition of Section~\ref{sec:measures} since $d_E(e,e_0) = \<e,1\>$ if $e_0$ denotes the null element of $E.$
	
	We conclude this section by introducing a key projection mapping which embeds $\Pc_1 (E)$ into $\Mc(\R^d)$ by detaching the label of particles.  
	Namely, we define $\pi:\Pc_1 (E) \longrightarrow \Mc(\R^d)$ as follows: for all $\nu \in \Pc_1 (E)$,
	\begin{equation}\label{definition h}
		\big\langle \pi(\nu), \varphi \big \rangle 
		:= \int_E \< e, \varphi \>\,\nu(de),
		~~\mbox{for all}~\varphi\in C_b(\R^d).
	\end{equation}
	It is a Lipschitz continuous map as established below.

\begin{lemma}\label{lemma continuity}
	It holds for all $\nu_1, \nu_2\in\Pc_1(E),$
	\begin{align*}
		\mathbf{d}_{\mathrm{BL}}\big(\pi(\nu_1), \pi(\nu_2)\big)
		~\leq~
		2 \Wc_1(\nu_1, \nu_2).
	\end{align*}
\end{lemma}

\begin{proof}
	Let $Z_1 = \sum_{k\in K_1}\delta_{(k,X^k)}$ and $Z_2 = \sum_{k\in K_2}\delta_{(k,Y^k)}$ be an arbitrary couple of $E$-valued random variables such that $\nu_1 = \Lc(Z_1)$ and $\nu_2 = \Lc(Z_2).$ 
		Recall that
	\begin{align*}
		\mathbf{d}_{\mathrm{BL}} \big(\pi(\nu_1), \pi(\nu_2)\big)
		=
		\sup_{\varphi\in \text{BL}_1(\R^d)}\big\{\<\pi(\nu_1),\varphi\> - \<\pi(\nu_2),\varphi\>\big\},
	\end{align*}
	where $\mathrm{BL}_1(\R^d)$ is the space of all Lipschitz functions on $\R^d$ with Lipschitz constant $1$ and uniformly bounded by $1$.
	Then a straightforward computation yields that, for any $\varphi\in \text{BL}_1(\R^d)$,
	\begin{align*}
	 \big|\<\pi(\nu_1),\varphi\> - \<\pi(\nu_2),\varphi\> \big|
		& =~
		\Big| \E\Big[\sum_{k\in K_1} \varphi (X^k)\Big] - \E\Big[\sum_{k\in K_2} \varphi(Y^k)\Big]  \Big|\\
		& \le~
			\E\Big[\sum_{k\in K_1\cap K_2}\big|\varphi (X^k) - \varphi(Y^k)\big| +\sum_{k\in K_1\setminus K_2}\big|\varphi(X^k)\big| + \sum_{k\in K_2\setminus K_1}\big|\varphi(Y^k)\big|\Big]\\
		& \leq~
			\E\Big[\sum_{k\in K_1\cap K_2}|X^k - Y^k|\wedge 2 + \# (K_1\triangle K_2)\Big].
	\end{align*}
    By definition of $d_E$ in~\eqref{eq:def_d_E}, we deduce that
	\begin{align*}
		\mathbf{d}_{\mathrm{BL}} \big(\pi(\nu_1), \pi(\nu_2)\big)
		~\leq~
		2\, \E\big[d_E(Z_1, Z_2)\big].
	\end{align*}
	The conclusion follows by taking the infimum over all couples $(Z_1,Z_2).$
\end{proof}

\section{Controlled McKean-Vlasov Branching Diffusion}
\label{sec:Formulation}

	Let us first introduce the class of controlled McKean-Vlasov branching diffusion processes, 
	and then derive some key properties, including a priori estimates and an invariance principle.
	Throughout the paper, we fix a subset $A \subset \R^n$ which serves as the action space.

\subsection{Construction and SDE Formulation} 

	The coefficients of the McKean-Vlasov branching diffusion process are given by
	\[
	\big( b, \sigma, \gamma, (p_{\ell})_{\ell \in \N} \big)
	:
	[0,T] \times \R^d \times \Mc(\R^d) \times A
	~\longrightarrow~
	\R^d \times \R^{d \times d} \times [0, \bar{\gamma}] \times [0,1]^{\N},
	\]
	where $\bar{\gamma} > 0$ is a fixed constant.
	Namely, $b$ and $\sigma$ are the drift and diffusion coefficients for the dynamic of each particle, $\gamma$ is the death rate, and $(p_{\ell})_{\ell \in \N}$ is the probability mass function of the progeny distribution. 
	In particular, it holds that $p_{\ell}(\cdot) \in [0,1]$ for each $\ell \in \N$, and $\sum_{\ell \in \N} p_{\ell}(\cdot) = 1$.
	Let us also define a partition $(I_{\ell}(\cdot))_{\ell \in \N}$ of $[0,1]$ by
$$
	I_{\ell} (\cdot)
	~:=~
	\Big[
		\sum_{i=0}^{\ell -1}p_i (\cdot), ~\sum_{i=0}^{\ell} p_i (\cdot) 
	\Big),
	~~\mbox{for each}~
	\ell \in \N.
$$

	Let $(\Om, \Fc, \F, \P)$ be a filtered probability space, with filtration $\F = (\Fc_t)_{t \ge 0}$ satisfying the usual conditions, 
	equipped with a family of mutually independent $d$-dimensional Brownian motions $(\Wk)_{k\in\K}$ and Poisson random measures $(\Qk(ds,dz))_{k\in\K}$ on $[0,T] \x [0, \gammab] \x [0,1]$ with Lebesgue intensity measure $ds\x dz$.
	It is assumed further that the $\sigma$--field $\Fc_0$ is sufficiently large to support a random variable for any distribution on $E$ and we denote 
	\begin{equation*}
	\Xi_t 
		  ~:=~
		  \big\{ \xi: \Om\to E  ~\Fc_t\mbox{-measurable such that } \E[\<\xi, 1\>]<+\infty \big\}.
	\end{equation*}

	Now
	we describe the dynamic of a branching diffusion process starting at time $t\in [0,T]$ in state $\xi\in\Xi_t$ and being controlled by $(\alpha^k)_{k \in \K}$ a family of $A$-valued predictable processes.
	It is represented as a $E$-valued process $(Z_s)_{s \in [t,T]}$ of the form
	\begin{equation*} \label{eq:Xk2Z}
		Z_s ~:=~ \sum_{k\in K_s}\delta_{(k,\Xk_s)}, ~~s \in [t,T],
	\end{equation*}
	where $K_s$ denotes the collection of labels of all particles alive at time $s \in [t,T]$ and $\Xk_s$ denotes the position of particle $k\in K_s$.
    We start from a prescribed initial condition $Z_t=\xi.$ Then the dynamic of each particle $k\in K_s$ is given by the controlled SDE
	\begin{equation}\label{eq:SDE}
		d\Xk_s ~=~ b(s, \Xk_s,\mu_s, \alpha^k_s)\,ds+\sigma(s, \Xk_s,\mu_s,\alpha^k_s)\,d\Wk_s,
	\end{equation}
	where $\mu_s\in\Mc(\R^d)$ corresponds to the mean-field interaction term, also called marginal measure, 
    defined by
	\begin{equation}\label{eq:def_mu_t}
	\<\mu_s, \varphi\> 
	~:= ~
	\E\Big[
		\sum_{k\in K_s}\varphi(\Xk_s)	
	\Big],
	~~\mbox{for all}~
	\varphi\in C_b(\R^d).
	\end{equation}
	Denote further by $S_k$ the birth time of particle $k$ and fix $S_k = t$ for each initial particle $k \in K_t$.
	Then each particle $k\in K_s$ runs a death clock with intensity $\gamma(s, \Xk_s, \mu_s,\alpha^k_s)$, \ie, it dies at time
$$
	T_k
	~:=~
	\inf
	\big\{
		s>S_k: \Qk \big(\{s\}\x[0,\gamma(s,\Xk_s,\mu_s,\alpha^k_s)] \x[0,1] \big) =1
	\big\}.
$$
Let $U_k$ be the unique random variable, uniformly distributed over the interval $[0,1],$ such that
$$
	\Qk \big(\{T_k\} \x [0,\gamma(T_k,\Xk_{T_k},\mu_{T_k},\alpha^k_{T_k})] \x \{U_k\} \big) ~=~1.
$$
When $U_k$ falls into the set $I_{\ell}(T_k, \Xk_{T_k}, \mu_{T_k}, \alpha_{T_k})$ of the partition of $[0,1]$, 
the particle $k$ gives birth to $\ell$ offspring particles labelled by $k1, \cdots, k \ell$ and we set
\begin{align*}
	K_{T_k} ~=~ \left(K_{T_k-} \setminus \{k\}\right) \cup \{k1, \cdots,k \ell\}.
\end{align*}
Then the birth time of the offspring particles is defined as the death time of the parent particle, \ie, $S_{ki}:= T_k$ for $i = 1, \cdots , \ell$.
Further, we consider that the offspring particles start from the same position as the parent particle, \ie,
$$
	X^{ki}_{S_{ki}} ~=~ X^k_{T_k-}, ~~\text{for } i ~=~ 1, \cdots, \ell.
$$
This completes the first presentation of a controlled McKean-Vlasov branching diffusion process by induction.

	Let us also provide a more formal definition of the process described above by means of SDE.
	Denote by $\Lc$ the infinitesimal generator of the diffusion $(b,\sigma)$, \ie, for all $(s, x ,m,a)\in[0,T]\x\R^d\x\Mc(\R^d)\x A$ and $\varphi \in C^2_b(\R^d,\R)$,
	\begin{equation*} \label{eq:def_Lc}
		\Lc \varphi (s, x, m, a)
		~:=~
		\frac{1}{2}\Tr\big(\sigma\sigma^{\top} (s, x,m, a)\nabla^2_x \varphi( x)\big) + b(s,x,m, a)\cdot \nabla_x \varphi(x),
	\end{equation*}
	where we denote by $\nabla_x, \nabla^2_x$ the gradient and Hessian operators acting on the space variables respectively.
	Then the McKean-Vlasov branching diffusion controlled by $(\alpha^k)_{k\in\K}$ with initial state $\xi$ at time $t$ can be characterized as the solution to the following SDE: for all $f:=(f^k)_{k\in\K}\in C^2_b (\K\x\R^d, \R),$ $s\in[t,T],$
\begin{multline} \label{eq:SDE_McKean-VlasovB}
	\< Z_s, f \>
	= 
	\<\xi ,f\>
	+
	\int_t^s \sum_{k\in K_\theta}\Lc f^k(\theta, \Xk_\theta, \mu_\theta,\alpha^k_\theta) \,d\theta
	+
	\int_t^s \sum_{k\in K_\theta}
	\nabla_x f^k(\Xk_\theta)\sigma(\theta,\Xk_\theta,\mu_\theta,\alpha^k_\theta)
	\,d\Wk_\theta \\		
	+\int_{(t,s]\x [0,\gammab] \x [0,1]}
	\sum_{k\in K_{\theta-}}\sum_{ \ell \geq 0}
	\Big(\sum_{i=1}^{ \ell} 
	f^{ki} -f^k\Big)
	(\Xk_\theta)\1_{[0,\gamma(\theta,\Xk_\theta,\mu_\theta,\alpha^k_\theta)]\x I_{\ell}(\theta,\Xk_\theta,\mu_\theta,\alpha^k_\theta)}(z)\,\Qk( d\theta,dz).
\end{multline}

	In the next section, we show that, under suitable assumptions, the controlled McKean-Vlasov branching diffusion process introduced above is well-defined for a class of closed-loop controls.

\subsection{Existence and Uniqueness}

	The coefficient functions will be assumed to satisfy the following conditions in the sequel. 
	This ensures, in particular, that the process is well-defined for Lipschitz continuous closed--loop controls as established in Proposition~\ref{proposition existence and uniqueness} below.

	\begin{assumption}\label{A.1}

 \noindent $\mathrm{(i)}$ The death rate $\gamma(\cdot)$ is bounded by the constant $\gammab>0$ and the mean of the progeny distribution  $\sum_{ \ell \ge 0} \ell p_{\ell}(\cdot)$ is bounded by a constant $M_1>0.$ 
In addition, there exists a probability mass function $(\bar{p}_\ell)_{\ell\ge 1}$ such that $\sum_{\ell\ge 1} {\ell\bar{p}_\ell} <+\infty$ and  $p_\ell(\cdot)\le C \bar{p}_\ell$ for all $n\ge 0,\ell\ge 1,$ for some $C>0.$ 
	
	\noindent $\mathrm{(ii)}$ The coefficient functions $b,$ $\sigma,$ $\gamma,$ $(p_\ell)_{\ell\in\N}$ are Lipschitz in $(x,m,a)$ in the sense that, for each $\varphi = b, \sigma, \gamma, p_\ell,$ there exists $C_\varphi>0$ such that 
	\begin{align*}
		\big| \varphi(s, x ,m, a) - \varphi(s, x',m', a') \big|   
		~&\leq~
		C_\varphi
		\big( |x-x'| + \mathbf{d}_{\mathrm{BL}}(m, m')  + |a-a'|\big), 
	\end{align*}
	for all $(s, x, x', m, m', a, a') \in [0,T] \x \R^d \x \R^d \x \Mc(\R^d) \x \Mc(\R^d) \x A \x A.$ 
	In addition, the Lipschitz constants $(C_{p_\ell})_{\ell\in\N}$ of the progeny distribution satisfy $\sum_{\ell \in\N} \ell C_{p_\ell} < \infty.$
	
	\noindent $\mathrm{(iii)}$ The volatility $\sigma$ is bounded and the drift $b$ satisfies a linear growth condition in $(x,a)$ in the sense that there exists $C_b>0$ such that 
	\begin{align*}
		\big|b(s, x ,m, a)\big|
		~&\leq~
		C_b\big(1+|x|  + |a|\big),
	\end{align*}
	for all $(s, x, m, a) \in [0,T] \x \R^d \x \Mc(\R^d) \x A.$
	\end{assumption}
	
Let us introduce next the class of admissible controls which correspond to Lipschitz continuous closed-loop strategies in our setting.

\begin{definition}\label{def:admissible}
	The class of admissible controls $\Ac$ is the collection of control processes $(\alpha^k)_{k \in \K}$ of the form
	$\alpha^k_s = \alpha(s, \Xk_s,\mu_s)$ for a mapping $\alpha:[0,T]\x\R^d\x\Mc(\R^d) \longrightarrow A$
	satisfying the following conditions: there exists $C_{\alpha}>0$ such that
	\begin{align*}
		\big|\alpha(s,x,m) - \alpha(s,x,m')\big|
		~&\leq~
		C_{\alpha} \left(|x-x'| +\mathbf{d}_{\mathrm{BL}}(m, m') \right),  \\
		\big|\alpha(s,x,m)\big|
		~&\leq~
		C_{{\alpha}} \big(1 + | x | \big),		
	\end{align*} 
	for all $(s,x,x',m,m')\in[0,T]\x\R^d\x\R^d\x\Mc(\R^d)\x\Mc(\R^d).$ 
\end{definition}

We can then introduce the following notation: for any map $\varphi$ on $[0,T]\x\R^d\x\Mc(\R^d)\x A$ and control $\alpha\in\Ac,$ 
\begin{equation*}
 \varphi^{\alpha} (s,x,m) :=  \varphi \big(s,x,m,\alpha(s,x,m)\big), \quad \text{for all }(s,x,m)\in [0,T]\x\R^d\x\Mc(\R^d).
\end{equation*}
Observe that one can think of a McKean-Vlasov branching diffusion process controlled by a closed-loop control $\alpha$ as an uncontrolled process with coefficient $(b^\alpha,\sigma^\alpha,\gamma^\alpha,p^\alpha_\ell).$ In particular, the existence and uniqueness result below relies on this observation.  So does the invariance principle in the next section.

\begin{proposition}\label{proposition existence and uniqueness}
	Let Assumption \ref{A.1} hold. Let also $t\in [0,T],$ $\xi\in \Xi_t$ 
	and $\alpha\in\Ac.$
	Then there exists a unique (up to indistinguishability) $E$-valued adapted càdlàg process 
    	\begin{equation*}
		Z^{t,\xi,\alpha}_s = \sum_{k\in K^{t,\xi,\alpha}_s}\delta_{(k,X^{k}_s)}, \quad s\in [t,T],
	\end{equation*}
	satistyfing \eqref{eq:SDE_McKean-VlasovB} together with the McKean-Vlasov condition \eqref{eq:def_mu_t}, such that
	\begin{equation}\label{ineq:supK}
		\E\Big[\sup_{s\in[t,T]} \# K^{t,\xi,\alpha}_s\Big]
		~\leq~
		\E\big[\<\xi, 1\>\big] e^{\bar\gamma M_1 (T-t)}.
	\end{equation}	
\end{proposition}

\begin{proof}
	It follows directly from Proposition~\ref{proposition existence and uniqueness-bis} below which is a slight extension of the strong existence and uniqueness result for (uncontrolled) McKean-Vlasov branching diffusion in~\cite[Theorem~2.3]{Claisse Kang Tan 2024}.
	Indeed,  using the Lipschitz continuity of $\pi$ from Lemma \ref{lemma continuity} together with Assumption~\ref{A.1} and Definition~\ref{def:admissible},
	we can easily check that, for each $\alpha\in\Ac,$ the coefficient functions 
	\begin{equation*}
	(s,x,\nu)\in [0,T]\x\R^d\x\Pc_1(E)  \longmapsto \big(b^\alpha, \sigma^\alpha, \gamma^{\alpha}, p^{\alpha}_{\ell} \big)\big(s, x, \pi(\nu)\big),
	\end{equation*}
	 satisfy the conditions required in Assumption~\ref{A.1-bis} below. In particular, they are Lipschitz continuous $(x,\nu),$ the drift term satisfying further a linear growth condition in $x.$
\end{proof}

We next provide an a priori estimate on the second moment of the McKean-Vlasov branching diffusion process.

\begin{lemma}\label{lemm:upperbound}
 Under the conditions of Proposition~\ref{proposition existence and uniqueness}, if we assume further that $\E[\<\xi, |\cdot|^2\>] < \infty,$ then there exists $C>0$ such that 
	\begin{align*}
		\E\bigg[\sup_{s\in [t,T]} \sum_{k\in K^{t,\xi,\alpha}_s}\big(1+|X^{k}_s|^2\big)\bigg] \le C \E\big[\<\xi, 1+|\cdot|^2\>\big].
	\end{align*}
\end{lemma}

\begin{proof}
 	It follows directly from Lemma~\ref{lemm:upperbound-bis} below, stated in the uncontrolled setting.
\end{proof}

\subsection{Invariance Principle}\label{sec:marginal}

   Let us denote by $\mu^{t,\xi, \alpha}$ the marginal measure associated to the McKean-Vlasov branching diffusion process $Z^{t,\xi,\alpha},$ \ie, 
\begin{align*}\label{eq:mean measure xi}
	\<\mu^{t,\xi, \alpha}_s,\varphi\>
	:=
	\mathbb{E}\bigg[\sum_{k \in K^{t,\xi,\alpha}_s} \varphi(X^{k}_s)\bigg],\quad \varphi\in C_b(\R^d).
\end{align*}
Observe that $\mu^{t,\xi, \alpha}_s = \pi(\Lc(Z^{t,\xi,\alpha}_s))$ where $\pi$ is defined by~\eqref{definition h}.
 Given $m\in\Mc(\R^d),$  we further denote~by
	    $$
		  \Xi_t^m 
		  ~:=~
		  \big\{ \xi:\Om\to E ~\Fc_t\mbox{-measurable such that } \pi(\Lc(\xi)) = m \big\}.
	   $$	
	   In this section, we restrict to the case $m\in\Mc_2(\R^d),$ \ie, the initial state $\xi\in\Xi_t$ satisfies the condition $\E[\<\xi, |\cdot|^2\>] < \infty.$ In view of Lemma~\ref{lemm:upperbound}, it follows that $\mu_s^{t,\xi,\alpha}\in\Mc_2(\R^d)$ for all $s\in[t,T].$
	   
	   Let us start with a preliminary lemma stating the continuity of the flow of marginal measures.
	   
	   \begin{lemma}\label{lem:flow-continuity}
 		Let Assumption \ref{A.1} hold. 
		Let also $t\in [0,T],$ $\xi\in\Xi_t$ such that $\E[\<\xi, |\cdot|^2\>] < \infty$  and $\alpha\in\Ac.$ 
		Then the flow of marginal measures $s\in [t,T] \mapsto \mu^{t,\xi,\alpha}_s\in\Mc_2(\R^d)$ is continuous.  
\end{lemma}

\begin{proof}
Recall that, by definition of the distance $\Wc_2$ on $\Mc_2(\R^d)$ in Section~\ref{sec:measures}, the continuity of $s\mapsto \mu^{t,\xi,\alpha}_s$ is equivalent to the continuity of
	\begin{equation*}
	 s\mapsto \langle\mu^{t,\xi,\alpha}_s,\phi\rangle = \E\bigg[\sum_{k\in K^{t,\xi,\alpha}_s} \phi(X^k_s)\bigg],
	\end{equation*}
   for all $\phi:\R^d\to\R$ continuous with quadratic growth. The latter follows directly by dominated convergence using Lemma~\ref{lemm:upperbound}.  
   Notice that, although the process $Z^{t,\xi,\alpha}$ has jumps,  it holds that  $\P(Z^{t,\xi,\alpha}_s = Z^{t,\xi,\alpha}_{s-}) = 1$ for all $s \in [t,T]$ since the jumps are generated by Poisson random measures.
\end{proof}

	Then we establish an invariance principle for McKean-Vlasov branching diffusion which plays a crucial role in the analysis of the control problem below. 
	It can be understood as a sort of uniqueness in marginal law for SDE~\eqref{eq:SDE_McKean-VlasovB}.
	It exploits further the symmetry of branching processes with respect to the choice of labelling.

  	\begin{proposition} \label{lemm:uniqueFKP}
		Let Assumption \ref{A.1} hold. Let also $t\in [0,T],$ $m \in \Mc_2(\R^d)$ and  $\alpha \in \Ac$.
		Then it holds
		$$
			\mu^{t,\xi_1, \alpha}_s = \mu^{t,\xi_2, \alpha}_s,
			\quad\mbox{for all }s\in[t,T],\,
			\xi_1, \xi_2 \in \Xi_t^m.
		$$
	\end{proposition}
	
	\begin{proof}
	 The proof is divided in two parts. First, we establish the result under strong assumptions on the coefficients by considering uniqueness of the corresponding Fokker-Planck equation.  Then we exploit the stability of solutions to SDE~\eqref{eq:SDE_McKean-VlasovB} to show that we can relax the assumptions on the coefficients.  For ease of notation, we assume that $t=0$ and we ignore the superscripts, including control term $\alpha,$ in the notations.   
	 Let us also denote the growth rate of the population by
	$$
		\kappa(t,x,m) := \gamma(t,x,m) \sum_{\ell \ge 0} (\ell -1) p_{\ell}(t,x,m), 
		~~\mbox{for all}~(t,x,m) \in [0,T] \x \R^d \x \Mc_2(\R^d).
	$$
	
\noindent\emph{Step 1.} 
   Consider the following nonlinear Fokker--Planck equation
	\begin{equation}\label{eq:FKP}
		\partial_t \mu + \sum_{i=1}^d {\partial_{x_i}\big( b(t,x,\mu_t) \mu \big)} - \frac12 \sum_{i,j=1}^d {\partial^2_{x_i x_j} \big( [\sigma\sigma^{\top}]_{i,j}(t,x, \mu_t) \mu \big)} + \kappa(t,x,\mu_t) \mu = 0,
	\end{equation}	
	with initial condition $\mu_0 = m \in \Mc_2(\R^d).$
We observe first that, for any initial state $\xi\in\Xi_0^m,$ the flow of marginal measures~\eqref{eq:def_mu_t} induces a distributional solution to this equation in the sense that for any test function $\varphi\in C^2_b([0,T]\x\R^d),$
	\begin{align*}
		&\<\mu_T,\varphi(T,\cdot)\> 
		~=~
		\<m, \varphi(0,\cdot)\> +
		\int_0^T
		\big\<\mu_t, \partial_t \varphi(t, \cdot)
		+
		\Lc\varphi(t, \cdot,\mu_t) 
		+
		\kappa \big(t, \cdot, \mu_t\big)\varphi(t, \cdot)\big\>
		\, dt.
	\end{align*}
    Indeed, it suffices to take expectation in a variant of It\^o's formula~\eqref{eq:SDE_McKean-VlasovB} for time-dependent test functions with $f^k=\varphi$ independent of the label. Thus the desired result would follow immediately from uniqueness to the nonlinear Fokker--Planck equation~\eqref{eq:FKP}. 
    
    To prove it, let $(\mu_t)_{t \ge 0}$ be a fixed distributional solution to~\eqref{eq:FKP} and 
 consider a classical (without mean-field interaction) branching diffusion process $ \hat Z = (\hat Z_t)_{t \ge 0}$ associated to the coefficient functions $(t,x) \longmapsto \big(b, \sigma, \gamma, (p_{\ell})_{\ell \ge 0} \big) (t,x, \mu_t)$ and initial state $\xi\in\Xi_0^m$. 
	It follows easily from It\^o's formula as above that the flow of marginal measures $(\hat{\mu}_t)_{t\ge 0}$ induced by $\hat Z$ satisfies in the distributional sense
	\begin{equation}\label{eq:LFKP}
		\partial_t \hat{\mu} + \sum_{i=1}^d {\partial_{x_i}\big( b(t,x,\mu_t) \hat{\mu} \big)} - \frac12 \sum_{i,j=1}^d {\partial^2_{x_i x_j} \big( [\sigma\sigma^{\top}]_{i,j}(t,x, \mu_t) \hat{\mu} \big)} + \kappa(t,x,\mu_t) \hat{\mu} = 0,
	\end{equation}
	with initial condition $\hat{\mu}_0 = m$. 
	Now, by Hambly and Jettkant~\cite[Proposition 2.4]{HamblyJ}, the linear Fokker-Planck equation~\eqref{eq:LFKP} has a unique (continuous) distributional solution under the following additional assumptions: $b, \sigma$ are uniformly bounded,  $\sigma$ is uniformly elliptic and the initial marginal measure $m$ admits a density function satisfying for some $\eta_0>0,$
		\begin{equation} \label{eq:cond_nu}
			 \int_{\R^d} m(x)^2 \exp{\big(\eta_0 \sqrt{1+|x|^2}\big)} \,dx < +\infty.
		\end{equation}
	Thus, under the above assumptions,  we deduce that $\mu_t = \hat \mu_t$ for all $t\in[0,T].$
	This implies further that $\hat Z$ is actually a solution to the McKean-Vlasov branching diffusion SDE~\eqref{eq:SDE_McKean-VlasovB}.
	Since pathwise (and thus weak)  uniqueness holds for this SDE by~\cite[Theorem 2.3]{Claisse Kang Tan 2024}, we conclude that $(\hat{\mu}_t)_{t\ge 0}=(\mu_t)_{t\ge 0}$ is the unique distributional solution to the nonlinear Fokker--Planck equation~\eqref{eq:FKP}.
	
	\noindent \noindent\emph{Step 2.} 	For any $\varepsilon>0$ and $\xi = \sum_{k \in K_0} \delta_{X^k_0} \in \Xi_0^m,$ define $\xi^{\eps} := \sum_{k \in K_0} \delta_{X^{k,\eps}_0}$ with
	$$
		X^{k,\eps}_0 := \Big( \frac{1}{-\eps} \vee X^k_0 \wedge \frac{1}{\eps} \Big) + \eps G^k, 
	$$
	where $(G^k)_{k\in\K}$ is a sequence of i.i.d.\@ random vectors with standard normal distribution, independent of $\xi$. It is clear that $m^{\eps} := \pi (\Lc(\xi^{\eps}))$ with $\pi$ defined in \eqref{definition h} satisfies the technical conditions in~\eqref{eq:cond_nu}. 
	Consider further the following modified drift and diffusion coefficients:
	$$
		b_{\eps}(\cdot) := \frac{1}{-\eps} \vee b(\cdot) \wedge \frac{1}{\eps}
		~~\mbox{and}~~
		 \sigma_{\eps} \sigma_{\eps}^{\top} (\cdot) := \sigma \sigma^{\top}(\cdot) + \eps I_d.
	$$
	Beware that $\sigma_{\eps} $ is not uniquely defined by the equation above.  Here we consider any version of $\sigma_\eps$ which is Lipschiptz continuous like $\sigma$, see, \eg, Stroock and Varadhan~\cite[Theorem~5.2.2]{Stroock06}.  
	
	Now consider two initial states $\xi_1$ and $\xi_2$  in $\Xi_0^m,$ and denote $Z^{\xi_1,\eps}$ (resp. $\mu^{\xi_1,\eps}$) and $Z^{ \xi_2, \eps}$ (resp. $\mu^{\xi_2,\eps}$) the solution to SDE~\eqref{eq:SDE_McKean-VlasovB} (resp. the marginal measure~\eqref{eq:def_mu_t}) associated to the coefficient $(b_{\eps}, \sigma_{\eps}, \gamma,(p_\ell)_{\ell\in\N})$ and the initial condition $\xi^{\eps}_1$ and $\xi^{\eps}_2$. Applying Step~1 above, we have by weak uniqueness of Fokker-Plank equation~\eqref{eq:FKP} that 
	\begin{equation}\label{eq:invariance-eps}
		\mu^{\xi_1,\eps}_t   = \mu^{\xi_2,\eps}_t, \qquad \text{for all }t\in[0,T].
	\end{equation}
	Further, since $(b_{\eps}, \sigma_{\eps}\sigma_{\eps}^\top)$ converges pointwise to $(b,\sigma\sigma^\top)$ and $m^\eps$ converges weakly to $m,$ the stability of the martingale problem 
     induces the weak convergence of $\mu^{\xi_i,\eps}_t$ toward $\mu^{\xi_i}_t$ in $\Mc(\R^d)$ for almost all $t\in[0,T]$ for $i=1,2.$ This is established precisely in Corollary~\ref{cor:stability} below in the uncontrolled setting.
	 The conclusion follows by passing to the limit $\varepsilon\to 0$ in~\eqref{eq:invariance-eps} for almost all $t\in[0,T]$ and using the continuity of the flow $t\mapsto \mu^{\xi_i}_t$ for $i=1,2$ established in Lemma~\ref{lem:flow-continuity}.
	\end{proof}

	Now given an arbitrary measure $m\in \Mc_2(\R^d),$ we can define in view of Proposition~\ref{lemm:uniqueFKP}, 
	\begin{equation}\label{def:flow}
	\mu^{t,m, \alpha}_s := \mu^{t,\xi, \alpha}_s,\qquad \text{for all } s\in[t,T],\,\xi \in \Xi_t^m.
 	\end{equation}
 	This is actually the (deterministic) process of interest in the optimal control problem studied in the next section.

\section{Optimal  Control Problem}\label{sec: control problem}

	We now introduce the finite horizon control problem on McKean-Vlasov branching diffusion studied in this paper.
	 It is inspired by the problem investigated in Pham and Wei~\cite{Pham Wei 2018} in the standard McKean-Vlasov setting.

\subsection{Formulation of the Problem}

	Let us introduce first the running and terminal cost functions, and make appropriate assumptions to ensure that the optimal control problem is well-defined.  

\begin{assumption}\label{A.3}
    The functions $L: [0,T] \times \mathbb{R}^d \times \Mc_2(\R^d) \times A \rightarrow \R$ and $g: \mathbb{R}^d \times \Mc_2(\R^d) \rightarrow \mathbb{R}$ are continuous 
and satisfy a quadratic growth condition in $(x,a)$ in the sense that there exist constants $C_L, C_g>0$ such that
	\begin{align*}
		\big|L(s, x ,m, a) \big| 
		&~\leq~
		C_L
		\big(1+| x |^2 + | a |^2 \big),\\
		\big|g( x ,m) \big| 
		&~\leq~
		C_g
		\big(1+| x |^2 \big),
	\end{align*}
	for all $(s,x,m,a)\in [0,T]\x\R^d\x\Mc_2(\R^d)\x A.$
\end{assumption}

	 We aim to study the following optimal control problem
	\begin{equation*}
	\inf_{\alpha\in\Ac} \E\bigg[\int_t^T 
		\sum_{k\in K^{t,\xi,\alpha}_s}L^{\alpha}\big(s,X^{k}_s,\mu^{t,\xi, \alpha}_s\big)	
		\,ds
	+
	\sum_{k\in K^{t,\xi,\alpha}_T}g\big(X^{k}_T,\mu^{t,\xi, \alpha}_T\big)
	\bigg],
	\end{equation*}
	or equivalently,
	\begin{equation*}
	\inf_{\alpha\in\Ac} \bigg\{\int_t^T \big\<\mu^{t,m, \alpha}_s, L^{\alpha} (s, \cdot, \mu^{t,m, \alpha}_s)\> \,ds
	+
	\<\mu^{t,m, \alpha}_T, g(\cdot, \mu^{t,m, \alpha}_T)\>\bigg\}.
	\end{equation*}
	where $m=\pi(\Lc(\xi))$  and $\mu^{t,m, \alpha} = \mu^{t,\xi, \alpha}$ as defined in~\eqref{def:flow}.
	Considering the second formulation coming from the invariance principle in Proposition~\ref{lemm:uniqueFKP}, it turns out that this optimization problem depends only on the deterministic flow of marginal measures  $(\mu^{t,m, \alpha}_s)_{s\in[t,T]}$ rather than on the whole stochastic process $(Z^{t,\xi, \alpha}_s)_{s\in[t,T]}.$ This key observation leads to a major simplification as it reduces the analysis to a deterministic control problem on the space of finite measure.
	
	Thus we can introduce the cost function $J:[0,T]\x\Mc_2(\R^d)\x\Ac\to\R$ as
	\begin{equation}\label{eq:cost}
	 J(t,m,\alpha) = \int_t^T \big\<\mu^{t,m, \alpha}_s, L^{\alpha}(s, \cdot, \mu^{t,m, \alpha}_s)\> \,ds
	+
	\big\<\mu^{t,m, \alpha}_T, g(\cdot, \mu^{t,m, \alpha}_T)\big>,
	\end{equation}
	and the value function $v:[0,T]\x\Mc_2(\R^d)\to\R$ as
	\begin{equation}\label{eq:value function}
	v(t,m) = \inf_{\alpha \in \Ac} J(t, m, \alpha).
	\end{equation}

\begin{proposition}\label{wellposedness of control problem}
	Let Assumptions \ref{A.1} and \ref{A.3} hold. Then the cost function $J$ as defined in \eqref{eq:cost} is well-defined and finite. 
\end{proposition}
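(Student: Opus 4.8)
The plan is to bound the running cost density and the terminal cost pointwise via the growth estimates of Assumption \ref{A.3}, and then to conclude with the a priori estimates of Lemmas \ref{lemma integrability K} and \ref{upperbound}, using crucially that the flow of marginals $(\mu^{t,\xi,\alpha}_s)_{s\in[t,T]}$ is \emph{deterministic}. Indeed, for $\xi$ satisfying \eqref{assumption xi}, Lemma \ref{upperbound} gives $\mu^{t,\xi,\alpha}_s \in \Mc_2(\R^d)$ for every $s$, and both $\langle 1,\mu^{t,\xi,\alpha}_s\rangle = \E[\#K^{t,\xi,\alpha}_s]$ and $\langle|\cdot|^2,\mu^{t,\xi,\alpha}_s\rangle = \E\big[\sum_{k\in K^{t,\xi,\alpha}_s}|X^{k,t,\xi,\alpha}_s|^2\big]$ are bounded uniformly in $s\in[t,T]$ by a finite constant; in particular the expressions $L(s,\cdot,\mu^{t,\xi,\alpha}_s,\cdot)$ and $g(\cdot,\mu^{t,\xi,\alpha}_s)$ appearing in \eqref{eq:cost} are well defined.

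Next I would use Assumption \ref{A.3} together with the linear growth $|\alpha_s(x)| \le L_\alpha(1+|x|)$ of the closed-loop control from Definition \ref{A.2} to get, for each alive particle $k$,
\[
	\big|L\big(s,X^{k,t,\xi,\alpha}_s,\mu^{t,\xi,\alpha}_s,\alpha_s(X^{k,t,\xi,\alpha}_s)\big)\big|
	~\le~
	C\big(1 + |X^{k,t,\xi,\alpha}_s|^2\big),
\]
for a constant $C$ depending only on $C_L$, $L_\alpha$ and the uniform bounds on $\langle 1,\mu^{t,\xi,\alpha}_s\rangle$ and $\langle|\cdot|^2,\mu^{t,\xi,\alpha}_s\rangle$ noted above; similarly $|g(X^{k,t,\xi,\alpha}_T,\mu^{t,\xi,\alpha}_T)| \le C(1+|X^{k,t,\xi,\alpha}_T|^2)$. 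Summing over $k\in K^{t,\xi,\alpha}_s$ (resp. $k\in K^{t,\xi,\alpha}_T$), taking expectation (all quantities being nonnegative, Tonelli's theorem lets us freely interchange the sum, the time integral and the expectation), and invoking Lemma \ref{lemma integrability K} to bound $\E[\#K^{t,\xi,\alpha}_s]$ and Lemma \ref{upperbound} to bound $\E\big[\sum_{k\in K^{t,\xi,\alpha}_s}|X^{k,t,\xi,\alpha}_s|^2\big]$, one concludes that the expectation of the absolute value of the integrand in \eqref{eq:cost} is integrable on $[t,T]$ and that the terminal term has finite expectation. Hence $J(t,\xi,\alpha)$ is absolutely convergent, so well defined and finite, and the two expressions for it in \eqref{eq:cost} coincide by Fubini's theorem.

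There is no serious obstacle here: the proof is a routine combination of the growth assumptions with the two a priori estimates already established. The only point worth keeping track of is that, since there is no common noise, $(\mu^{t,\xi,\alpha}_s)_{s\in[t,T]}$ is a deterministic flow, so the interaction contributions $\langle 1,\mu^{t,\xi,\alpha}_s\rangle$ and $\langle|\cdot|^2,\mu^{t,\xi,\alpha}_s\rangle$ enter the estimate as finite constants rather than as random variables; in particular no bound on higher moments of $\#K^{t,\xi,\alpha}_s$ is required, and all the needed integrability is already contained in Lemmas \ref{lemma integrability K} and \ref{upperbound}.
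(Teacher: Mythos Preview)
Your proposal is correct and follows essentially the same approach as the paper: bound $|L|$ and $|g|$ via Assumption \ref{A.3} and the linear growth of the closed-loop control, then invoke Lemmas \ref{lemma integrability K} and \ref{upperbound}. The only cosmetic difference is that you absorb the deterministic quantities $\langle 1,\mu^{t,\xi,\alpha}_s\rangle$ and $\langle|\cdot|^2,\mu^{t,\xi,\alpha}_s\rangle$ into a single constant before summing over particles, whereas the paper keeps them explicit and ends up with cross terms like $\big(\E[\sup_s\#K_s]\big)^2$ and $\E[\sup_s\#K_s]\,\E\big[\sup_s\sum_k|X^k_s|^2\big]$ in the final bound; both routes use the same ingredients.
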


\begin{proof}
	Fix $t\in[0,T],$ $m\in\Mc_2(\R^d), $ $\alpha\in\Ac,$ and pick any $\xi \in \Xi_t^m$ so that $\mu^{t,m, \alpha} = \mu^{t,\xi, \alpha}.$  For ease of notation, we omit the superscripts $t,\xi,m,\alpha$ in the proof.  
	Then it follows that the term corresponding to the running cost satisfies
	\begin{align*}
	  \int_t^T\big\< \mu_s, \big| L^{\alpha}(s, \cdot, \mu_s)\big| \> \,ds
		~=&~
		\E\Big[\int_t^T \sum_{k\in K_s} \big|L^{\alpha}(s, X^{k}_s, \mu_s)  \big| \,ds \Big]\\
		~\leq&~
		C_L \E\Big[\int_t^T\sum_{k\in K_s}\big(1+|X^{k}_s|^2 +  |\alpha(s,X^k_s,\mu_s)|^2\big) \,ds\Big].
	\end{align*}
	In addition, by the linear growth condition on $\alpha$ from Definition~\ref{def:admissible}, it holds 
	\begin{align*}
	  \E\Big[\int_t^T\sum_{k\in K_s} |\alpha(s,X^k_s,\mu_s)|^2 \,ds\Big] 
		~\leq&~
		2 C^2_\alpha \E\Big[\int_t^T\sum_{k\in K_s}\big(1+|X^{k}_s|^2\big) \,ds\Big].
	\end{align*}
	Thus we have that
	\begin{align*}
	  \int_t^T\big\<\mu_s, \big| L^{\alpha}(s, \cdot, \mu_s)\big| \> \,ds
		~\leq&~
		C_L \big(1 + 2 C^2_\alpha\big) \E\Big[\int_t^T\sum_{k\in K_s}\big(1+|X^{k}_s|^2 \big) \,ds\Big],
	\end{align*}	
	and the latter is finite by Lemma~\ref{lemm:upperbound}. 
Similarly, we can deal with the term corresponding to the terminal cost as follows:
	\begin{align*}
	 \big\< \mu_T, \big| g(\cdot, \mu_T)\big| \>
	 =
		\E\Big[\sum_{k\in K_T} \big| g( X^{k}_T, \mu_T)  \big| \Big] 
		\leq
		C_g \E\Big[\sum_{k\in K_T}\big(1+|X^{k}_T|^2 \big)\Big].
	\end{align*}	
\end{proof}

\subsection{Dynamic Programming Principle}

	We can now establish the dynamic programming principle for the control problem introduced above. 
	It relies on the flow property satisfied by the flow of marginal measures
as established in the preliminary lemma below.

\begin{lemma}\label{lemma flow property}
		Let Assumption \ref{A.1} hold. 
		Let also $t\in [0,T],$ $m\in\Mc_2(\R^d)$ and $\alpha\in\Ac.$
		Then the flow of marginal measures $(\mu^{t, m,\alpha}_s)_{t\le s\le T}$ satisfies the flow property in the sense that
	\begin{align*}
		\mu^{t,m,\alpha}_\theta
		~=&~
		\mu^{s, \mu^{t,m,\alpha}_s,\alpha}_\theta,\quad \quad\text{for all } t \le s\le \theta \le T.
	\end{align*}
\end{lemma}

\begin{proof}
    Pick any $\xi \in \Xi_t^m$ so that $\mu^{t,m, \alpha} = \mu^{t,\xi, \alpha}.$ 
	By uniqueness of the solution to the McKean-Vlasov branching diffusion SDE~\eqref{eq:SDE_McKean-VlasovB}, we have the following cocycle property:
	\begin{align*}
		Z_\theta^{t,\xi,\alpha}
		~=&~
		Z_\theta^{s, Z^{t,\xi,\alpha}_s,\alpha},
		\quad\text{for all } t\le s\le \theta\le T.
	\end{align*}
	We deduce that
	\begin{align*}
		\mu^{t,m,\alpha}_\theta
		~=&~
		\mu^{t,\xi,\alpha}_\theta
		~=~
		\mu^{s, Z^{t,\xi,\alpha}_s,\alpha}_\theta
		~=~
		\mu^{s, \mu^{t,m,\alpha}_s,\alpha}_\theta.
	\end{align*}
\end{proof}

\begin{theorem}\label{DPP}
	Let Assumptions \ref{A.1} and \ref{A.3} hold.
	Let $v$ be the value function defined by~\eqref{eq:value function}.
	Then it holds for all $(t, m) \in [0,T] \x \Mc_2(\R^d),$ $s \in [t,T],$
	\begin{equation}\label{eq:dpp}
		v(t, m)
		~=~
		\inf_{\alpha\in\Ac}\bigg\{
			\int_{t}^{s} \<\mu^{t,m,\alpha}_\theta, L^{\alpha}(\theta,\cdot,\mu^{t,m,\alpha}_\theta)\> d\theta
		+
		v(s, \mu^{t,m,\alpha}_s)
		\bigg\}.
	\end{equation}
\end{theorem}

\begin{proof}
	We follow the pipeline of the proof for a deterministic dynamic programming principle.  We observe first that 
	\begin{align}
		J(t,m,\alpha) & ~=~ \int_{t}^{T} \<\mu^{t,m,\alpha}_\theta, L^{\alpha}(\theta,\cdot,\mu^{t,m,\alpha}_\theta)\> d\theta
		+
		\<\mu^{t,m,\alpha}_T, g(\cdot, \mu^{t,m,\alpha}_T)\> \nonumber\\
		 & ~ = ~	\int_{t}^{s} \<\mu^{t,m,\alpha}_\theta, L^{\alpha}(\theta,\cdot,\mu^{t,m,\alpha}_\theta)\> d\theta + J(s,\mu^{t,m,\alpha}_s, \alpha),\label{eq:conditionning}
	\end{align}      
	where we used the flow property $\mu^{t,m,\alpha}_\theta = \mu^{s,\mu^{t,m,\alpha}_s, \alpha}_\theta$ for $\theta\in[s,T]$ from Lemma \ref{lemma flow property} in the second equality.
	It follows that
	\begin{equation*}
	J(t,m,\alpha) ~ \geq ~
		\int_{t}^{s} \<\mu^{t,m,\alpha}_\theta, L^{\alpha}(\theta,\cdot,\mu^{t,m,\alpha}_\theta)\> d\theta
		+
		v(s, \mu^{t,m,\alpha}_s).
	\end{equation*}
	Taking the infimum over all controls $\alpha\in\Ac,$ we deduce the first inequality
	\begin{align*}
		v(t,m)
		~\geq~
		\inf_{\alpha\in\Ac}\bigg\{
			\int_{t}^{s} \<\mu^{t,m,\alpha}_\theta, L^{\alpha}(\theta,\cdot,\mu^{t,m,\alpha}_\theta)\> d\theta
		+
		v(s, \mu^{t,m,\alpha}_s)
		\bigg\}.
	\end{align*}
	For the reverse inequality, the idea is to concatenate two arbitrary controls, one before and one after the intermediate time $s.$ Namely,  we consider the control $\gamma(\theta,\cdot) := \1_{\theta\le s}\alpha(\theta,\cdot) + \1_{\theta>s}\beta(\theta,\cdot)$ with $\alpha, \beta\in\Ac$.  It is clear that $\gamma\in\Ac$ and it follows from~\eqref{eq:conditionning} and the definition of $\gamma$ that
	\begin{align*}
		v(t, m)
		~\leq ~
		J(t,m,\gamma)
		& ~=~
		\int_{t}^{s} \<\mu^{t,m,\gamma}_\theta, L^{\gamma}(\theta,\cdot,\mu^{t,m,\gamma}_\theta ) \> d\theta
		+
		J(s,\mu^{t,m,\gamma}_s, \gamma) \\
		& ~=~
		\int_{t}^{s} \<\mu^{t,m,\alpha}_\theta, L^{\alpha}(\theta,\cdot,\mu^{t,m,\alpha}_\theta)\> d\theta
		+
		J(s,\mu^{t,m,\alpha}_s, \beta).
	\end{align*}
    Taking the infimum over $\beta\in\Ac,$ we deduce that
	\begin{equation*}
		v(t,m)
		~\leq~
		\int_{t}^{s} \<\mu^{t,m,\alpha}_\theta, L^{\alpha}(\theta,\cdot,\mu^{t,m,\alpha}_\theta)\> d\theta
		+
		v(s, \mu^{t,m,\alpha}_s).
	\end{equation*}
	The reverse inequality then follows by taking the inifimum over $\alpha\in\Ac.$
\end{proof}

\subsection{Differentiation and It\^o's formula}

	We now define the notion of linear derivative for functionals defined on the space of finite measures,
	and then recall from Cao, Ren and Tan~\cite{CaoRenTan} It\^o's formula for the flow of marginal measures induced by McKean-Vlasov branching diffusion.

\begin{definition}\label{definition linear functional derivative}
	$\mathrm{(i)}$ For a function $F:\Mc_2(\R^d)\longrightarrow\R$,  the linear derivative, if it exists, is a continuous mapping $\delta_m F: \R^d\x\Mc_2(\R^d)\longrightarrow\R$ with at most quadratic growth in $x$ 
	satisfying
	\begin{align*}
		F(m) - F(m')
		~=~
		\int_0^1\int_{\R^d}\delta_m F(x,\lambda m+(1-\lambda) m')(m - m')(dx) d\lambda,
	\end{align*}
	for all $m,m'\in\Mc_2(\R^d).$
	
	\noindent $\mathrm{(ii)}$ 
	For a function $F: [0, T) \x \Mc_2(\R^d)\longrightarrow\R$, 
	we say that $F$ is of class $C^{1,2}([0,T) \x \Mc_2(\R^d))$ if the partial derivatives $\partial_t F$, $\delta_m F$, $\nabla_x \delta_m F$, $\nabla^2_x \delta_m F$ exist and are continuous, satisfying the following growth condition: there exists $C >0$ such that 
	\begin{align*}
	 \big|\delta_m F(s,x,m) \big| \vee\big|\nabla^2_x\delta_m F(s,x,m) \big| 
		\leq
		C
		\big(1+| x |^2 \big), 
		\quad 
		\big|\nabla_x\delta_m F(s,x,m) \big| 
		\leq
		C
		\big(1+| x | \big),
	\end{align*}	
	for all $(s,x,m)\in [0,T)\x \R^d\x\Mc_2(\R^d).$
\end{definition}

\begin{remark}
The mapping $\nabla_x \delta_m F$ is often called the intrinsic derivative in the literature and, unlike the standard case of probability distributions, the linear derivative $\delta_m F$ is uniquely defined here as elements of $\Mc_2(\R^d)$ can have different mass. 
\end{remark}

We are now in a position to state a variant of It\^o's formula, which together with the dynamic programming principle allow us to identify the Hamilton-Jacobi-Bellman equation satisfied by the value function.  Notice that the growth conditions imposed in Definition~\ref{definition linear functional derivative} are tailor-made to enforce integrability in the formula below.

\begin{proposition}\label{proposition Ito formula}
	Let $F\in C^{1,2}([0,T) \x \Mc_2(\R^d)).$ Let also $t\in[0,T),$ $m\in\Mc_2(\R^d),$ $\alpha\in\Ac.$
	Then it holds for all $s\in[t,T),$
	\begin{align}\label{eq:Ito_form}
		F(s, \mu^{t,m,\alpha}_s) 
		=  F(t, m) + 
		\int_t^s
		\big( \partial_t F(\theta, \mu^{t,m,\alpha}_\theta) 
		+
		\langle \mu^{t,m,\alpha}_\theta, \Gc^{\alpha}_\theta F(\theta,\cdot, \mu^{t,m,\alpha}_\theta) \rangle \big) 
		 \,d\theta,
	\end{align}
	where  for all $\theta\in[0,T)$ $x\in\R^d,$ $m\in\Mc_2(\R^d),$
\begin{multline*}
	\Gc^{\alpha}_\theta F(\theta,x,m)
	:=
	b^{\alpha}\big(\theta,x,m\big)\cdot \nabla_x \delta_m F(\theta,x,m)
	+
	\frac{1}{2}\Tr\big(\sigma^\alpha\sigma^{\alpha \top}\big(\theta,x,m\big) \nabla^2_x \delta_m F(\theta,x,m)\big)\\
	+
	\gamma^\alpha\big(\theta,x,m\big)\sum_{\ell\geq0}(\ell - 1)p_\ell^\alpha\big(\theta,x,m\big)\delta_m F(\theta,x,m).
\end{multline*}	
\end{proposition}

\subsection{Hamilton-Jacobi-Bellman Equation}

Let us denote by $\As$ the collection of all Lipschitz continuous maps $\as:\R^d\to A.$ Notice that admissible controls in $\Ac$ correspond to a subclass of mappings from $[0,T]\x\Mc_2(\R^d)$ to $\As.$

We start with a classical verification theorem which states that a smooth solution to the Hamilton-Jacobi-Bellman equation, if it exists, is unique and coincides with the value function. It also identifies an optimal control in a feedback form under stringent assumptions. 

\begin{theorem}\label{theorem varification}
	Let Assumptions \ref{A.1} and \ref{A.3} hold.  We assume that there exists a smooth function $u\in C^{1,2}([0,T)\x\Mc_2(\R^d))\cap C([0,T]\x\Mc_2(\R^d))$ satisfying
	\begin{equation}\label{eq:Hamilton-Jacobi-Bellman}
		\begin{cases}
			\partial_t u(t,m) + \inf_{\as\in\As}\big\{
				\< m, \Gc^{\as}_t u(t,\cdot,m) + L^{\as}(t,\cdot,m )\>
			\big\}
			~=~
			0, & \text{on } [0,T)\x \Mc_2(\R^d),\\
			u(T,m)
			~=~
			\<m, g(\cdot, m) \>,  & \text{on } \Mc_2(\R^d).
		\end{cases}
	\end{equation}
	Assume further that the infimum is attained by an admissible control in the sense that there exists $\hat\alpha\in \Ac$ such that
	\begin{equation}\label{eq:optimal}
	 \hat{\alpha}(t,\cdot,m) = \argmin_{\as\in\As}\big\{
				\<m, \Gc^{\as}_tu(t,\cdot,m) + L^{\as}(t,\cdot,m )\>
			\big\}.
	\end{equation}
	Then $u(t,m) = v(t,m) = J(t, m, \hat\alpha)$ for all $t\in[0,T],\,m \in \Mc_2(\R^d)$.
\end{theorem}

\begin{proof}
     Denote $T_\varepsilon := T-\varepsilon$ for $\varepsilon>0.$
     	We start by applying It\^o's formula~\eqref{eq:Ito_form} to $u$ from $(t,m)$ to $(T_\varepsilon,\mu^{t,m,\alpha}_{T_\varepsilon})$ with an arbitrary control $\alpha\in\Ac$ as follows:
	\begin{equation*}
		u(t,m)
		~=~
		u(T_\varepsilon, \mu^{t,m,\alpha}_{T_\varepsilon})
		-
		\int_t^{T_\varepsilon} \Big( \partial_{t} u(s,\mu^{t,m,\alpha}_s) +
			\big<\mu^{t,m,\alpha}_s, \Gc^{\alpha}_s u(s,\cdot, \mu^{t,m,\alpha}_s) \big\> 
			\Big) 
		\,ds.
	\end{equation*}
	Since $u$ is a continuous solution to the Hamilton-Jacobi-Bellman equation \eqref{eq:Hamilton-Jacobi-Bellman}, it follows that
	\begin{align*}
		u(t,m)
		~\leq&~
		u(T_\varepsilon, \mu^{t,m,\alpha}_{T_\varepsilon})
		+
		\int_t^{T_\varepsilon}
		\<\mu^{t,m,\alpha}_s, L^{\alpha}(s,\cdot,\mu^{t,m,\alpha}_s)\>
		\,ds 
		\xrightarrow[\varepsilon\to 0]{}
		J(t,m,\alpha).
	\end{align*}	
	By arbitrariness of $\alpha$,  we deduce the first inequality
	\begin{align}\label{eq:u leq v}
		u(t,m)
		\leq
		\inf_{\alpha \in \Ac} J(t, m, \alpha) = v(t,m).
	\end{align}
	For the reverse inequality, we repeat the same computation with the control $\hat\alpha$ defined by~\eqref{eq:optimal}, which attains the minimum in the Hamilton-Jacobi-Bellman equation. 
	We obtain
	\begin{align*}
		u(t,m)
		~=~
		u(T_\varepsilon, \mu^{t,m,\hat\alpha}_{T_\varepsilon})
		+
		\int_t^{T_\varepsilon}
		\big\<\mu^{t,m,\hat\alpha}_s, L^{\hat\alpha}(s,\cdot,\mu^{t,m,\hat\alpha}_s)\big\>
		\,ds
		\xrightarrow[\varepsilon\to 0]{}
		J(t,m,\hat{\alpha}).
	\end{align*}
	Together with \eqref{eq:u leq v}, we conclude that
	\begin{align*}
		v(t,m)
		~\geq~
		u(t,m)
		~=~
		J(t,m,\hat\alpha)
		~\geq~
		v(t,m),
	\end{align*}
	and thus equality holds.
\end{proof}

Next we prove a converse to the verification theorem establishing that, if the value function is smooth, then it necessarily satisfies the Hamilton-Jacobi-Bellman equation.

\begin{proposition}\label{Hamilton-Jacobi-Bellmanprop}
	Let Assumptions \ref{A.1} and \ref{A.3} hold. 
	Suppose in addition that the value function $v$ is of class $C^{1,2}([0,T)\x\Mc_2(\R^d))$. 
	Then it satisfies for all $t\in[0,T),$ $m\in\Mc_2(\R^d),$
	\begin{equation*}
			\partial_t v(t,m) + \inf_{\as\in\As}\big\{
				\<m, \Gc^{\as}_tv(t,\cdot,m) + L^{\as}(t,\cdot,m )\>
			\big\}
			~=~
			0.
	\end{equation*}
\end{proposition}

\begin{proof}
	Observe first that there is a natural embedding of $\As$ into $\Ac$ by letting $\alpha(t,x,m)=\as(x).$
	Then we can consider an arbitrary control $\as\in\As$ and a small time step $h>0.$ 
	By the dynamic programming principle~\eqref{eq:dpp}, we have 
	\begin{align}\label{ineq:1st inequality Hamilton-Jacobi-Bellman DPP}
		v(t,m)
		~\leq~
		\int_{t}^{t+h} \<\mu^{t,m,\as}_s,L^{\as}(s,\cdot,\mu^{t,m,\as}_s )\> \,ds
		+
		v(t+h, \mu^{t,m,\as}_{t+h}).
	\end{align}
	In addition, it follows from It\^o's formula~\eqref{eq:Ito_form} that
	\begin{align}\label{eq:1st inequality Hamilton-Jacobi-Bellman It\^o}
		v(t+h, \mu^{t,m,\as}_{t+h})
		~=&~
		v(t,m)
		+
		\int_t^{t+h} \Big(\partial_t v(s, \mu^{t,m,\as}_s)
		+
		\big\<
			\mu^{t,m,\as}_s, \Gc^{\as}_s v(s,\cdot, \mu^{t,m,\as}_s)
		\big\>
		\Big)
		 \,ds.
	\end{align}
	Combining \eqref{ineq:1st inequality Hamilton-Jacobi-Bellman DPP} and \eqref{eq:1st inequality Hamilton-Jacobi-Bellman It\^o}, we obtain
	\begin{align*}
		0
		~\leq~		
		\int_t^{t+h} \Big(\partial_t v(s, \mu^{t,m,\as}_s)
		+
		\big\<
			\mu^{t,m,\as}_s ,\Gc^{\as}_s v(s,\cdot, \mu^{t,m,\as}_s) +L^{\as}(s,\cdot,\mu^{t,m,\as}_s )
		\big\>\Big)
		 \,ds.
	\end{align*}
	Dividing by $h>0$ and letting $h\to 0,$ we deduce by continuity of the integrand that 
		\begin{equation*}\label{eq:continuity}
		0
		~\leq~
		\partial_t v(t,m)
		+
			\big\<
				m, \Gc^{\as}_t v(t,\cdot,m) + L^{\as}(t,\cdot,m)
			\big\>.
	\end{equation*}
	Notice that the continuity of the integrand comes from the continuity of $s\mapsto \mu^{t,m,\as}_s$ established in Lemma~\ref{lem:flow-continuity} and by dominated convergence theorem using the estimates 
	\begin{equation*}
	 \left|L^{\as}(s,x,m)\right| \le C (1 + |x|^2), \qquad \left|\Gc^{\as}_s v(s,x,m) \right| \le C (1 + |x|^2).
	\end{equation*}
Taking the infimum over all controls $\as\in\As,$ we obtain the first inequality
	\begin{equation*}\label{ineq:Hamilton-Jacobi-Bellman 1st inequality}
		0
		~\leq~
		\partial_t v(t,m)
		+
		\inf_{ \as\in\As}\big\{
			\big\<
				m, \Gc^{\as}_t v(t,\cdot,m) + L^{\as}(t,\cdot,m)
			\big\>
			\big\}.
	\end{equation*}
	For the reverse inequality, we assume by contradiction that there exists $(t,m)\in [0,T)\x\Mc_2(\R^d)$ and $\as\in\As$ such that  
	\begin{equation*}
		\partial_t v(t,m)
		+
			\big\<
				m, \Gc^{\as}_t v(t,\cdot,m) + L^{\as}(t,\cdot,m)
			\big\>
			~<~ 0.
	\end{equation*}	
	By continuity, there exists $h>0$ such that for all $s\in[t,t+h],$
		\begin{equation*}
		\partial_t v(s,\mu^{t,m,\as}_s)
		+
			\big\<
				\mu^{t,m,\as}_s, \Gc^{\as}_s v(s,\cdot, \mu^{t,m,\as}_s) + L^{\as}(s,\cdot,\mu^{t,m,\as}_s)
			\big\>
			~<~ 0.
	\end{equation*}
	Applying It\^o's formula~\eqref{eq:Ito_form}, we deduce that 	
	\begin{align*}
		v(t,m)  
		~=&~
		v(t+h, \mu^{t,m,\as}_{t+h})
		-		
		\int_t^{t+h} \Big(\partial_t v(s, \mu^{t,m,\as}_s)
		+
		\big\<
			\mu^{t,m,\as}_s, \Gc^{\as}_s v(s,\cdot, \mu^{t,m,\as}_s)
		\big\>
		\Big)
		 \,ds \\
		~>&~
		v(t+h, \mu^{t,m,\as}_{t+h})
		+		
		\int_t^{t+h} 
		\big\<
			\mu^{t,m,\as}_s,L^{\as}(s,\cdot,\mu^{t,m,\as}_s )
		\big\>
		 \,ds.
	\end{align*}
	This contradicts the dynamic programming principle in Theorem~\ref{DPP}.
\end{proof}

\section{Linear-Quadratic Example}
\label{sec: LQ}

	In this section, we provide a simple example of control problem in a linear quadratic setting.
	We give both an explicit solution to the Hamilton-Jacobi-Bellman equation and the corresponding optimal control by using the verification argument in Theorem~\ref{theorem varification}.

	For simplicity, we consider the one-dimensional setting and we introduce the following notations:
\begin{align*}
	\bar m
	:=
	m(\R),~~
	m_1
	:=
	\int_{\R} x \,m(dx),~~
	m_2
	:=
	\int_{\R}x^2 \,m(dx).
\end{align*}
	 The drift coefficient is assumed to be linear: for all $s\in[0,T], x\in\R, m\in\Mc_2(\R), a\in \R,$
\begin{equation*}\label{eqs: LQ coefficients}
	b(s, x, m, a) = b_1(s)\, x + b_2(s)\, \bar{m} + b_3(s)\, a,
\end{equation*}	
	where $b_1, b_2, b_3\in C([0,T],\R).$ 
	All the other coefficients are assumed to be constant: the volatility $\sigma\in\R,$ the death rate $\gamma\ge 0$ and the progeny distribution $(p_\ell)_{\ell\in\N}\in [0,1]^{\N}$ such that $\sum_{\ell\in \N} {p_\ell} =1$ and $\sum_{\ell\geq0}\ell p_\ell<+\infty.$
We also denote the growth rate of the population by $\kappa:=\gamma\sum_{\ell\geq0}(\ell-1)p_\ell.$
The cost functions are assumed to be quadratic: for all $s\in[0,T], x\in\R, m\in\Mc_2(\R), a\in \R,$
\begin{align*}
	L(s,x,m,a)
	~:=&~
	L_1 (s)\, x^2
	+
	L_2(s)\, \bar m
	+
	L_3(s)\, m_1
	+
	L_4(s)\, a^2,
	\\
	g(x,m)
	~:=&~
	g_1\, x^2 + g_2\, \bar m + g_3\, m_1,
\end{align*}
where $L_1,L_2,L_3,L_4\in C\big([0,T], \R)$  with $L_4>0$ and  $g_1,g_2,g_3\in\R$.

Let us now discuss how this linear-quadratic example compares to the assumptions of the paper. 
Regarding Assumptions~\ref{A.1}, it suffices to check it for the drift $b$ as all the other parameters are constant. Since the functions $b_1, b_2, b_3$ are continuous and thus bounded on $[0,T]$, the Lipschitz continuity comes easily from the following computation: for all $m,\tilde{m}\in\Mc(\R),$
 \begin{equation*}
  \big|m(\R) - \tilde{m}(\R)\big|= \big|\<m,1\> - \<\tilde{m},1\>\big|\leq \sup_{\varphi\in\text{BL}_1(\R)} \big\{\<m,\varphi\>-\<\tilde{m},\varphi\> \big\} =   \mathbf{d}_{\mathrm{BL}} (m, \tilde{m}).
 \end{equation*}
 As  for the linear growth condition,  it holds only locally in $\Mc(\R)$ in the sense that  for all $M>0,$ there exists $C_b^M>0$ such that 
	\begin{align*}
		\big|b(s, x ,m, a)\big|
		~&\leq~
		C^M_b\big(1+|x|  + |a|\big),
	\end{align*}
	for all $(s, x, m, a) \in [0,T] \x\R\x \Mc(\R)  \x A$ such that $m(\R)\le M.$ This is actually sufficient for our analysis to hold as the mass of the marginal measure remains bounded in view of~\eqref{ineq:supK}. 
	Similarly, regarding Assumptions~\ref{A.3},  the quadratic growth condition holds only locally in $\Mc_2(\R)$ in the sense that for all $M>0,$ there exists $C_L^M,C_g^M>0$ such that 
	\begin{align*}
	\big|L(s, x ,m) \big|
	 \le 
	 C^M_L
		\big(1 + |x|^2 + |a|^2 \big), 
		\qquad 
		\big|g( x ,m) \big| 
		\leq
		C^M_g
		\big(1+| x |^2 \big),
	\end{align*}
	for all $(s,x, m) \in [0,T]\x\R\x\Mc_2(\R)$ such that $\<m,1+|\cdot|^2\>\le M,$ which is also sufficient in view of Lemma~\ref{lemm:upperbound}. 

\begin{proposition}
	The value function of the linear quadratic problem introduced above is given by
	\begin{align*}
		v(t,m) 
		=
		\Lambda(t)m_2 +  \Gamma(t)\bar m m_1 +  \Gamma_1(t) \bar m  + \Gamma_2(t) \bar m ^2 + \Gamma_3(t)\bar m^3,
	\end{align*}
	and the corresponding optimal control by 
	\begin{align*}
		\hat{\alpha}(t, x, m) 
		= -\frac{b_3(t)}{2L_4(t)} \nabla_x \delta_m v(t,x,m)
		= -\frac{b_3(t)}{2L_4(t)} \big(2 \Lambda(t)\, x + \Gamma(t)\, \bar m\big),
	\end{align*}
	with $\Lambda$, $\Gamma,$ $\Gamma_i \in C^1([0,T], \R)$ for $i=1,2,3,$ being the (unique) solution to the following system of ordinary differential equations:
		\begin{equation}\label{ode system}
		\left\{ \begin{array}{rl}
					\Lambda'(t) -\frac{b_3(t)^2}{L_4(t)} \Lambda(t)^2 + \big(2 b_1(t) +\kappa\big)\Lambda(t) + L_1(t)  = 0,&  \Lambda(T) = g_1,\\ 
			\Gamma'(t) + \big(b_1(t) + 2\kappa - \frac{b_3(t)^2}{L_4(t)}\Lambda(t) \big)\Gamma(t) + 2b_2(t) \Lambda(t) + L_3(t) =0, & \Gamma(T) = g_3, \\
			\Gamma'_1(t) + \kappa\Gamma_1(t) + \sigma^2\Lambda(t) = 0,  &\Gamma_1(T) = 0, \\ 
			\Gamma'_2(t) + 2 \kappa \Gamma_2(t) + L_2(t)  = 0, &   \Gamma_2(T) = g_2, \\ 
			\Gamma'_3(t) + 3 \kappa \Gamma_3(t) + b_2(t)\Gamma (t) - \frac{b_3(t)^2}{4 L_4(t)}\Gamma(t)^2   = 0, &  \Gamma_3(T) = 0. 
		\end{array}\right.
	\end{equation}		
\end{proposition}

\begin{proof}
The proof relies on the verification theorem established in  Theorem~\ref{theorem varification}. 
 Let us look for a solution to the Hamilton-Jacobi-Bellman equation $w:[0,T]\x\Mc_2(\R)\to\R$  of the form
	\begin{align*}
		w(t,m) 
		~:=~ 
		\Lambda(t)m_2 + \Gamma(t)\bar m m_1 + \Gamma_1(t) \bar m  + \Gamma_2(t) \bar m ^2 + \Gamma_3(t)\bar m^3.
	\end{align*} 
	with unknown functions $\Lambda, \Gamma, \Gamma_1,  \Gamma_2, \Gamma_3$ to identify.  First we observe by a straightforward computation that for all $(t,x,m)\in[0,T]\x\R\x\Mc_2(\R),$
	\begin{align*}
		\delta_m w(t,x,m)
		~=&~ 
		\Lambda(t) x^2 + \Gamma(t)\bar m x + \Gamma(t) m_1 + \Gamma_1(t) + 2\Gamma_2(t) \bar m  + 3\Gamma_3(t)\bar m^2,\\
		\nabla_x \delta_m w(t,x, m)
		~=&~
		2\Lambda(t)x + \Gamma(t)\bar m,\\
		\nabla^2_x \delta_m w(t,x,m)
		~=&~
		2\Lambda(t).
	\end{align*}
	In addition, we have for all $\as\in\As,$
	\begin{align*}
	\< m, L^\as\big(t,\cdot, m\big)\>
	~=~ 
	L_1(t)m_2 + L_2(t)\bar m^2 + L_3(t)\bar m m_1 +\int_{\R}L_4(t)\as(x)^2 m(dx).
	\end{align*}
	It follows that
	\begin{multline}\label{LQ Hamilton-Jacobi-Bellman Gc}
		\<m, \Gc^\as_t w(t,\cdot,m) + L^\as\big(t,\cdot, m\big)\>
		=
		2(b_1\Lambda)(t) m_2 
		+
		(2b_2\Lambda + b_1\Gamma)(t) \bar m m_1
		+
		(b_2\Gamma)(t)\bar m ^3
		+
		 \sigma^2 \Lambda(t) \bar m \\
		\begin{aligned}
		& +
		\kappa\big(\Lambda(t)m_2 + 2\Gamma(t)\bar m m_1 + \Gamma_1(t)\bar m + 2\Gamma_2(t)\bar m^2+ 3\Gamma_3(t)\bar m^3\big) \\
		& + L_1(t)m_2 + L_2(t)\bar m^2 + L_3(t)\bar m m_1 + H^{\as}(t,m),
		\end{aligned}
	\end{multline}
	where the dependence in $\as$ is contained in the last term
	\begin{align*}
		H^{\as}(t,m) 
		~:=&~
		\int_{\mathbb{R}} L_4(t) \as(x)^2 m(dx)
		+
		\int_{\R} b_3(t) \big(2 \Lambda(t)x + \Gamma(t)\bar m\big) \as(x) m(dx).
	\end{align*}
	Next we observe that minimizing $H^{\as}$ over $\as\in\As$ is equivalent to minimizing the integrand 
	\begin{align*}
		L_4(t) \as(x)^2 +b_3(t) \big(2 \Lambda(t)x + \Gamma(t)\bar m\big) \as(x).
	\end{align*}
	The minimum is attained at 
	\begin{align*}
		\hat{\alpha}(t, x, m) 
		~=~
		-\frac{b_3(t)}{2L_4(t)} \big(2 \Lambda(t)\, x + \Gamma(t)\, \bar m\big),
	\end{align*}	
	and we have
	\begin{align}\label{LQ H optimal}
		H^{\hat{\alpha}}(t,m) 
		~=&~
		-\frac{b^2_3(t)}{4L_4(t)}\int_{\R} \Big(2 \Lambda(t) x + \Gamma(t) \bar m \Big)^2 m(dx)\nonumber\\
		~=&~
		- \frac{1}{L_4(t)}(b_3\Lambda)^2(t) m_2 - \frac{1}{L_4(t)}(b_3^2\Lambda\Gamma)(t) \bar m m_1 -\frac{1}{4L_4(t)}(b_3\Gamma)^2(t) \bar m^3.
	\end{align}	
	Therefore, combining \eqref{LQ Hamilton-Jacobi-Bellman Gc} and \eqref{LQ H optimal}, it holds that 
	\begin{multline*}
	   \partial_t w(t,m)  + \inf_{\as\in\As}\big\{
				\<m, \Gc^{\as}_t w(t,\cdot,m) + L^{\as}(t,\cdot,m )\>
			\big\}\\		
	 \begin{aligned}
		= ~ & 
		\Big(\Lambda'(t) + 2(b_1\Lambda)(t) + \kappa \Lambda(t) + L_1(t) - \frac{1}{L_4(t)}(b_3\Lambda)^2(t)  \Big)m_2 \\
		&~+
		\Big(\Gamma'(t) + (2b_2\Lambda + b_1\Gamma)(t) + 2\kappa \Gamma(t)+ L_3(t) - \frac{1}{L_4(t)}(b_3^2\Lambda\Gamma)(t)\Big)\bar m m_1\\
		&~+
		\Big(\Gamma'_1(t) + \sigma^2\Lambda(t) + \kappa \Gamma_1(t)\Big)\bar m
		+
		\Big(\Gamma'_2(t) + 2\kappa \Gamma_2(t) + L_2(t)\Big)\bar m^2\\
		&~+
		\Big(\Gamma'_3(t) + (b_2\Gamma)(t) + 3\kappa\Gamma_3(t) - \frac{1}{4L_4(t)}(b_3\Gamma)^2(t)\Big)\bar m^3.
		\end{aligned}
	\end{multline*}
	Regarding the terminal condition, we have
	\begin{align*}
		w(T,m) ~= ~ \Lambda(T)m_2  + \Gamma(T)\bar m m_1 + \Gamma_1(T) \bar m  + \Gamma_2(T) \bar m ^2 + \Gamma_3(T)\bar m^3,
	\end{align*}	
 and
	\begin{align*}
		\< m, g(\cdot, m)\>
		~=~
		g_1 m_2 + g_2 \bar m^2 + g_3 \bar m m_1.
	\end{align*} 
	Thus if the functions $\Lambda, \Gamma, \Gamma_1,  \Gamma_2, \Gamma_3$ satisfy the system of ODEs \eqref{ode system},  then $w$ is solution to the Hamilton-Jacobi-Bellman equation~\eqref{eq:Hamilton-Jacobi-Bellman}.
    In addition, since $\hat{\alpha}(t,x,m)$ is linear w.r.t.\ $x$ and $m,$ it defines an admissible control --- up to relaxing the linear growth condition as discussed above for the drift coefficient. 
    We conclude by Theorem~\ref{theorem varification} that $w(t,m)=v(t,m)=J(t,m,\hat{\alpha}).$
\end{proof}

\section{Complements on McKean-Vlasov Branching Diffusion}
\label{sec:complement}

Recall that one can think of a McKean-Vlasov branching diffusion process controlled by a closed-loop control $\alpha$ as an uncontrolled process with coefficient $(b^\alpha,\sigma^\alpha,\gamma^\alpha,p^\alpha_\ell).$ 
The aim of this section is to provide additional results on (uncontrolled) McKean-Vlasov branching diffusion which are used in the paper. 

\subsection{Existence and Uniqueness}

 The class of (uncontrolled) McKean-Vlasov branching diffusion processes was introduced in~\cite{Claisse Kang Tan 2024} in the path-dependent setting with coefficients depending on the whole distribution of the process.  However it was assumed that the drift coefficient is bounded and we need to extend some of the results therein to handle a linear growth condition. 
 For the sake of consistency and clarity,  we restrict to the case where the coefficient functions depend on the state variable in a Markovian way, while the dependence on the mean-field term is general, with the interaction measure variable lying in $\Pc(E)$.
	
	In this section, by abuse of notation, the coefficients of the McKean-Vlasov branching diffusion process are given by
	\[
	\big( b, \sigma, \gamma, (p_{\ell})_{\ell \in \N} \big)
	:
	[0,T] \times \R^d \times \Pc_1(E)
	~\longrightarrow~
	\R^d \times \R^{d \times d} \times [0, \bar{\gamma}] \times [0,1]^{\N},
	\]
	where $\sum_{\ell\in\N}{p_\ell(\cdot)} =1.$
Let the operator $\Lc$ be given by, for all $(s, x ,\nu)\in[0,T]\x\R^d\x\Pc_1(E)$ and $\varphi \in C^2_b(\R^d,\R)$,
	\begin{equation*} 
		\Lc \varphi (s, x, \nu)
		~:=~
		\frac{1}{2}\Tr\big(\sigma\sigma^{\top} (s, x,\nu)\nabla^2_x \varphi(x)\big) + b(s,x,\nu)\cdot \nabla_x \varphi(x).
	\end{equation*}
	Then the McKean-Vlasov branching diffusion with initial state $\xi$ at time $t$ can be characterized as the solution to the following SDE: for all $f:=(f^k)_{k\in\K}\in C^2_b (\K\x\R^d, \R),$ $s\in[t,T],$
\begin{multline} \label{eq:SDE_McKean-VlasovB-bis}
	\< Z_s, f \>
	= 
	\<\xi ,f\>
	+
	\int_t^s \sum_{k\in K_\theta}\Lc f^k(\theta, \Xk_\theta, \nu_\theta) \,d\theta
	+
	\int_t^s \sum_{k\in K_\theta}
	\nabla_x f^k(\Xk_\theta)\sigma(\theta,\Xk_\theta,\nu_\theta)
	\,d\Wk_\theta \\		
	+\int_{(t,s]\x [0,\gammab] \x [0,1]}
	\sum_{k\in K_{\theta-}}\sum_{ \ell \geq 0}
	\Big(\sum_{i=1}^{ \ell} 
	f^{ki} -f^k\Big)
	(\Xk_\theta)\1_{[0,\gamma(\theta,\Xk_\theta,\nu_\theta)]\x I_{\ell}(\theta,\Xk_\theta,\nu_\theta)}(z)\,\Qk( d\theta,dz),
\end{multline}
with the McKean-Vlasov condition
\begin{equation}\label{eq:def_mu_t-bis}
 \nu_s = \Lc(Z_s).
\end{equation}
We can show under the following assumptions that there exists a unique solution to this SDE.

	\begin{assumption}\label{A.1-bis}
    	\noindent $\mathrm{(i)}$ The death rate $\gamma$ is bounded by the constant $\gammab>0$ and the first moment of the progeny distribution  $\sum_{ \ell \ge 0} \ell p_{\ell}$ is bounded by a constant $M_1>0.$
	
	\noindent $\mathrm{(ii)}$ The coefficient functions $b,$ $\sigma,$ $\gamma,$ $(p_\ell)_{\ell\in\N}$ are Lipschitz in $(x,m)$ in the sense that, for each $\varphi = b, \sigma, \gamma, p_\ell,$ there exists $C_\varphi>0$ such that 
	\begin{align*}
		\big| \varphi(s, x ,\nu) - \varphi(s, x',\nu') \big|   
		~&\leq~
		C_\varphi
		\big( |x-x'| + \Wc_1(\nu, \nu')\big), 
	\end{align*}
	for all $(s, x, x', \nu, \nu') \in [0,T] \x \R^d \x \R^d \x \Pc_1(E) \x \Pc_1(E).$ 
	In addition, the Lipschitz constants $(C_{p_\ell})_{\ell\in\N}$ of the progeny distribution satisfy $\sum_{\ell \in\N} \ell C_{p_\ell} < \infty.$
	
	\noindent $\mathrm{(iii)}$ The volatility $\sigma$ is bounded by a constant $\bar{\sigma}>0$ and the drift $b$ satisfies a linear growth condition in $x$ in the sense that there exists $C_b>0$ such that 
	\begin{align*}
		\big|b(s, x ,\nu)\big|
		~&\leq~
		C_b\big(1+|x|\big),
	\end{align*}
	for all $(s, x, \nu) \in [0,T] \x \R^d \x \Pc_1(E).$
	\end{assumption}

\begin{proposition}\label{proposition existence and uniqueness-bis}
	Let Assumption \ref{A.1-bis} hold. Let $t\in [0,T]$ and $\xi\in \Xi_t.$ 
	Then there exists a unique (up to indistinguishability) $E$-valued adapted càdlàg process 
    	\begin{equation*}
		Z^{t,\xi}_s = \sum_{k\in K^{t,\xi}_s}\delta_{(k,X^{k}_s)}, \quad s\in [t,T],
	\end{equation*}
	satistyfing \eqref{eq:SDE_McKean-VlasovB-bis} together with the McKean-Vlasov condition \eqref{eq:def_mu_t-bis},
	such that 
	\begin{equation}\label{ineq:supK-bis}
		\E\Big[\sup_{s\in[t,T]} \# K^{t,\xi}_s\Big]
		~\leq~
		\E\big[\<\xi, 1\>\big] e^{\bar\gamma M_1 (T-t)}.
	\end{equation}	
\end{proposition}

\begin{proof}
   It is a slight generalization of Theorem~1 in \cite{Claisse Kang Tan 2024}. The only difference is that we now allow the drift $b$ to have linear growth instead of being uniformly bounded.  The proof works by the same arguments, using a Banach fixed point theorem coming from a contraction property on short time horizon.  To this end, the crucial estimate is provided by Lemma~3 in \cite{Claisse Kang Tan 2024}.  This is exactly where we need to amend slightly the proof to account for the linear growth of the drift. 
   More precisely, using the notations from \cite{Claisse Kang Tan 2024}, we need to replace Equation~(19) in Step~2 by a localized version as follows: 
   recall that $\|\cdot\| = \|\cdot\|_\infty \wedge 1,$ we have   
    \begin{multline*} \label{eq:ineq_path}
    \1_{k\in K^{1\cap2}_t} \big\|\Xmuk_{t\wedge\cdot}-\Xnuk_{t\wedge\cdot}\big\|
    \le
    \1_{k\in K^{1\cap2}_t} \big\|\Xmuk_{\bar{S}^k\wedge\cdot}-\Xnuk_{\bar{S}^k\wedge\cdot}\big\|  \\
    + 
    \int_0^{t \wedge \tau^k}
                \1_{k\in K^{1\cap2}_s}\big| b^{1,k}_s - b^{2,k}_s \big|
            ds 
         +
           \sup_{0\le r\le t \wedge \tau^k} \Big|\int_0^r
                \1_{k\in K^{1\cap2}_s}\big(\sigma^{1,k}_s - \sigma^{2,k}_s\big)
            d\Wk_s\Big|,
    \end{multline*}
    which comes by observing that
        $$
	\big\|\Xmuk_{t\wedge\cdot}-\Xnuk_{t\wedge\cdot}\big\|
	=
	\big\|\Xmuk_{t \wedge \tau^k \wedge\cdot}-\Xnuk_{t \wedge \tau^k \wedge\cdot}\big\|,
	~~\mbox{with}~~
	\tau^k := \inf \big\{s \ge \bar{S}^k ~: \big\|\Xmuk_{s\wedge\cdot}-\Xnuk_{s\wedge\cdot}\big\| \ge 1 \big\}.
    $$
    Then the drift term can be treated as before
by means of the Lipschitz condition on $b$:
    \begin{align*}
        \E\Big[
          \int_0^{t \wedge \tau^k}
                \1_{k\in K^{1\cap2}_s}
                \big|
                    b^{1,k}_s - b^{2,k}_s
                \big|
            ds 
        \Big] 
       & \leq 
        C \E\Big[
         	\int_0^{t}
                \1_{k\in K^{1\cap2}_s} 
                \Big(
                    \big\|\Xmuk_{s\wedge\cdot}-\Xnuk_{s\wedge\cdot}\big\|
                    +
                    \Wc_1\big(\mu^1_s,\mu^2_s\big)
                \Big) ds
        \Big].
    \end{align*}
    The rest of the proof can be left completely unchanged.
\end{proof}

We next provide an a priori estimate on the second moment of the McKean-Vlasov branching diffusion process.

\begin{lemma}\label{lemm:upperbound-bis}
 Under the conditions of Proposition~\ref{proposition existence and uniqueness-bis}, if we assume further that $\E[\<\xi, |\cdot|^2\>] < \infty,$ then there exists $C>0$ such that 
	\begin{align*}
		\E\bigg[\sup_{s\in [t,T]} \sum_{k\in K^{t,\xi}_s} \big( 1 + |X^{k}_s|^2\big) \bigg] \le C \E\big[\<\xi, 1+|\cdot|^2\>\big].
	\end{align*}
\end{lemma}
     
\begin{proof}
    For ease of notation, we omit the superscripts $t,\xi$ in the proof.  
	Let $(\tau_n)_{n\in\N}$ be a sequence of stopping times defined by
	\begin{equation*}\label{eq:ST1}
		\tau_n
		:=
		\inf\big\{s \geq t : \exists\, k\in K_s ~~ |X^{k}_s|
		\geq n\big\}.
	\end{equation*}
	Applying It\^o's formula~\eqref{eq:SDE_McKean-VlasovB-bis} with $f^k(x):=|x|^2$ up to time $\tau_n$, we obtain that for any $s\in[t,T],$ 
	\begin{multline*}
		\sum_{k \in K_{s \wedge \tau_n}} | X_{s \wedge \tau_n}^{k} \big|^2
		~=~
		\< \xi, 1 + |\cdot|^2 \> 
		+
		\int_t^{s \wedge \tau_n} \sum_{k \in K_\theta} 2 {\Xk_\theta}^\top \sigma(\theta,\Xk_\theta,\nu_\theta) \,dW_\theta^k \\
		\begin{aligned}
		&+
		\int_t^{s \wedge \tau_n} \sum_{k \in K_\theta} \Big( 2  X_\theta^{k}  \cdot b (\theta,\Xk_\theta,\nu_\theta )  +  \Tr \big( \sigma \sigma^{\top} (\theta,\Xk_\theta,\nu_\theta) \big) \Big)  \,d\theta \\
		&+ 
		\int_{(t, s \wedge \tau_n] \times [0,\gammab] \times [0,1]}
		\sum_{k\in K_{\theta-}}\sum_{ \ell \geq 0}
		(\ell - 1)
		 |X_\theta^k|^2
		\1_{[0,\gamma(\theta,\Xk_\theta,\nu_\theta)]\x I_{\ell}(\theta,\Xk_\theta,\nu_\theta)}(z)\,\Qk( d\theta,dz).
    \end{aligned}
	\end{multline*}
	Next we consider the supremum over the interval $[t,T]$ and take expectation. Then we study the upper bounds of each term on the right hand side. By martingale property, we can apply the Burkholder-Davis-Gundy inequality to the second term: there exists a constant $C_1>0$ such that
	\begin{multline*}		
	\begin{aligned}
		\mathbb{E} \bigg[ \sup_{s \in [t, T]} \int_t^{s \wedge \tau_n} \sum_{k \in K_\theta} 2 {X_\theta^{k}}^\top \sigma (\theta,\Xk_\theta,\nu_\theta ) \,d\Wk_\theta\bigg]
		~\leq&~ 
		C_1 \|\sigma\|_{\infty}^2  \mathbb{E} \bigg[ \bigg( \int_t^{T\wedge \tau_n}  \sum_{k \in K_\theta} | X_\theta^{k} |^2  d\theta \bigg)^{1/2} \bigg]\\
		~\leq&~
		 C_1 \|\sigma\|_{\infty}^2 \E \bigg[ \1_{\#K_t > 0} + \int_t^{T\wedge \tau_n} \sum_{k \in K_\theta}  | X_\theta^{k} |^2  \,d\theta\bigg],
	 \end{aligned}
	\end{multline*}
	where we applied the inequality $\sqrt{x}\leq 1 + x$ for $x >0$ in the last line. Now we deal with the third term associated with the infinitesimal generator.  
   In view of the linear growth assumptions on $b,$ it follows that
	\begin{multline*}
		\mathbb{E} \bigg[ \sup_{s \in [t, T]}
		\int_t^{s \wedge \tau_n} \sum_{k \in K_\theta} \Big( 2  X_\theta^{k}  \cdot b (\theta,\Xk_\theta,\nu_\theta )  +  \Tr \big( \sigma \sigma^{\top} (\theta,\Xk_\theta,\nu_\theta) \big) \Big)  \,d\theta \bigg]\\
		\begin{aligned}
		~\leq&~  
		\mathbb{E} \bigg[ \int_t^{T \wedge \tau_n} \sum_{k \in K_\theta} \bigg( | X_\theta^{k} |^2 + |b(\theta,\Xk_\theta,\nu_\theta)|^2 + \|\sigma\|_\infty^2 \bigg) \,d\theta  \bigg]\\
		 ~\leq&~ 
		 \big( 2C_b^2 + \|\sigma\|_\infty^2 + 1\big) \mathbb{E} \bigg[ \int_t^{T\wedge \tau_n}  \sum_{k \in K_\theta} \big(1 + | X_\theta^{k} |^2\big)  \,d\theta \bigg].
		 \end{aligned}
	\end{multline*}
	Lastly, the branching term can be treated as follows:
	\begin{multline*}
		\mathbb{E} \bigg[ \sup_{s \in [t, T]} \int_{(t, s \wedge \tau_n] \times [0,\gammab] \times [0,1]}
		\sum_{k\in K_{\theta-}}\sum_{ \ell \geq 0}
		(\ell - 1)
		|X_\theta^k|^2
		\1_{[0,\gamma(\theta,\Xk_\theta,\nu_\theta)]\x I_{\ell}(\theta,\Xk_\theta,\nu_\theta)}(z) \,\Qk( d\theta,dz) \bigg]\\
		\begin{aligned}
		~\leq&~ 
			\mathbb{E} \bigg[ \int_{(t, T \wedge \tau_n] \times [0,\gammab] \times [0,1]}
		\sum_{k\in K_{\theta-}}\sum_{ \ell \geq 1}
		(\ell - 1)
		|X_\theta^k|^2
		\1_{[0,\gamma(\theta,\Xk_\theta,\nu_\theta)]\x I_{\ell}(\theta,\Xk_\theta,\nu_\theta)}(z) \,\Qk( d\theta,dz) \bigg]\\
		~\leq&~ 
		\mathbb{E} \bigg[ \int_t^{T \wedge \tau_n} \sum_{k \in K_{\theta}} \gamma(\theta,\Xk_\theta,\nu_\theta) \sum_{\ell \geq 1} (\ell - 1) |X_\theta^k|^2  p_\ell (\theta,\Xk_\theta,\nu_\theta)  \,d\theta \bigg]\\
		~\leq&~
		\bar\gamma M_1 \mathbb{E} \bigg[ \int_t^{T \wedge \tau_n} \sum_{k \in K_\theta}  |X_\theta^k|^2 \,d\theta \bigg].
		\end{aligned}
	\end{multline*}
	By combining all the estimates above, we deduce that for some constant $C>0$ depending on $\bar{\gamma}, M_1, \|\sigma\|_{\infty}, C_b, T,$
	\begin{align*}
		\mathbb{E} \bigg[ \sup_{s \in [t, T]} \sum_{k \in K_{s \wedge \tau_n}} | X_{s \wedge \tau_n}^{k} |^2  \bigg]
		& \le
		\E\big[\<\xi, |\cdot|^2\>\big] + C \E \bigg[ \1_{\#K_t > 0} + \int_t^{T \wedge \tau_n} \#K_\theta \,d\theta +  \int_t^{T \wedge \tau_n} \sum_{k \in K_\theta}  |X_\theta^k|^2 \,d\theta \bigg] \\
		& \le
		\E\big[\<\xi, |\cdot|^2\>\big] + C \E\big[\<\xi, 1\>\big] + C \E \bigg[\int_t^{T} \sup_{s\in [t, \theta]} \sum_{k \in K_{s\wedge \tau_n}} | X_{s\wedge \tau_n}^{k} |^2  \,d\theta \bigg],
	\end{align*}
	by using
	\begin{equation*}
	 \E \bigg[ \1_{\#K_t > 0} + \int_t^{T \wedge \tau_n} \#K_\theta \,d\theta \bigg] \le (1 + T)\, \mathbb{E} \bigg[ \sup_{s\in [t, T]} \# K_s \bigg] \le  (1+T)\, \E\big[\<\xi, 1\>\big] e^{\gammab M_1 T}.
	\end{equation*} 
	Then we can apply Gr\"onwall Lemma to obtain
	\begin{align*}
		\mathbb{E} \bigg[ \sup_{s \in [t, T]} \sum_{k \in K_{s \wedge \tau_n}}  | X_{s\wedge \tau_n}^{k} |^2 \bigg]
		\leq 
		 C \E\big[\<\xi, 1+|\cdot|^2\>\big].
	\end{align*}
	The constant $C > 0$ being independent of $n,$
	the conclusion follows by sending $n$ to infinity. 
\end{proof}

\subsection{Stability in Distribution}

The main result of this section is a stability property for SDE~\eqref{eq:SDE_McKean-VlasovB-bis}.  It is a key step toward the proof of the invariance principle in Section~\ref{sec:marginal}. 
Note that a similar result can be found in~\cite[Proposition~4]{Claisse Kang Tan 2024}, where the authors established a property of pathwise stability under the assumption that the coefficients are uniformly convergent.  In the present setting, we show that we can relax this condition by considering stability in distribution.

\begin{proposition}\label{prop:stability} 
 Let $(\xi_n)_{n\in\N}$ be a sequence of initial state in $\Xi_0$ such that $\sup_{n\in\N}\E[\<\xi_n, 1 + |\cdot|^2\>]<+\infty.$ Let also $(b_n,\sigma_n,\gamma_n,p_\ell^n)_{n\ge 0}$ be a sequence of coefficients satisfying Assumption~\ref{A.1-bis} with identical constants $(\gammab,M_1,C_b,C_\sigma,C_{\gamma},C_{p_\ell},\bar{\sigma}).$ 
Assume further that 
\begin{itemize}
 \item[\rm(i)] there exists a $\N$-valued random variable $\bar{N}_0$ such that $\P(\<\xi_n,1\>\ge N) \le \P(\bar{N_0}\ge N)$ for all $N\ge 1, n\ge 0,$ 
 
 \item[\rm(ii)] there exist a probability mass function $(\bar{p}_\ell)_{\ell\ge 1}$ and a constant $C>0$ such that $\sum_{\ell\ge 1} {\ell\bar{p}_\ell} <+\infty$ and  $p^n_\ell(\cdot)\le C \bar{p}_\ell$ for all $\ell\ge 1, n\ge 0.$ 
\end{itemize}
If $(b_n,\sigma_n\sigma_n^\top,\gamma_n,p_\ell^n)$ converges pointwise to $(b,\sigma\sigma^\top,\gamma,p_\ell)$ and $\xi_n$ converges weakly to $\xi,$ then the sequence of solutions $Z^n$ to SDE~\eqref{eq:SDE_McKean-VlasovB-bis} with coefficient $(b_n,\sigma_n,\gamma_n,p^n_\ell)$ and initial state $\xi_n$ converges weakly to the solution $Z$ to SDE~\eqref{eq:SDE_McKean-VlasovB-bis} with coefficient $(b,\sigma,\gamma,p_\ell)$ and initial state $\xi$ in the sense that $\P\circ (Z^{n})^{-1}$ converges to $\P\circ Z^{-1}$ in $\Pc(D([0,T],E)).$ 
\end{proposition}

\begin{proof} 
 The proof proceeds in four steps. We show first that the sequence $\Pt_n := \P\circ (Z^n)^{-1}$ is tight in $\Pc(D([0,T],E)).$ Then we identify any limit point as a solution to an appropriate martingale problem. Finally we conclude by 
uniqueness to SDE~\eqref{eq:SDE_McKean-VlasovB-bis}. 
 Let us introduce the canonical space 
as $\tilde \Om := D([0,T],E)$ the set of all $E$-valued c\`adl\`ag paths on $[0,T]$.
	It is equipped with the canonical process $\tilde Z_t = \sum_{k \in \tilde K_t} \tilde X^k_t$ for $t \in [0,T]$ and the canonical filtration $\tilde \F = (\tilde \Fc_t)_{0 \le t \le T}$ generated by $\tilde Z$.

   \noindent\emph{Step 1.} Let us show first that the sequence $\Pt_n$ is tight in $\Pc(D([0,T],E)).$ It comes by similar arguments as~\cite[Proposition~3]{Claisse Kang Tan 2024}, which we reproduce here for the sake of completeness.  
We aim to apply Aldous' criterion of tightness, see, \eg, Billingsley~\cite[Theorem 16.10]{Billingsley 2013}. 
	The condition of uniform bound comes by Markov's inequality and~\eqref{ineq:supK-bis} as follows: 
		\begin{equation*}
		 \P \Big(\sup_{0\leq t\leq T} d_E(Z^n_t,e_0) \geq N\Big)\leq \frac{1}{N} {\E\Big[ \sup_{0\leq t\leq T}  \# K^n_t\Big]} \leq \frac{e^{\gammab M_1 T}}{N} \sup_{n\in\N} \E\big[\<\xi_n, 1\>\big]\xrightarrow[N\to +\infty]{} 0,
		\end{equation*}
		where we recall that $d_E$ is defined by~\eqref{eq:def_d_E} and $e_0$ is the null element of $E$ so that $d_E(Z^n_t,e_0)=\# K^n_t.$
		It remains to check the condition  of equicontinuity which writes as follows: for all $\varepsilon>0$, 
		\begin{equation} \label{eq:Aldous2}
			\lim_{\delta\rightarrow0}\sup_{n\in\N}\sup_{\tau \in\Tc}\P \big(d_E(Z^n_\tau, Z^n_{(\tau+\delta)\wedge T})>\varepsilon\big) = 0,
		\end{equation}
		where $\Tc$ is the collection of all $[0,T]$--valued stopping times on $(\Om, \Fc, \F).$
      Given $\delta >0,$ $n\in\N$ and $\tau\in\Tc,$ denote $\tau':= (\tau + \delta)\wedge T$ and observe that
	\begin{multline}\label{eq:aldous}
		d_E(Z^n_\tau,Z^n_{\tau'})
		\leq
		\sum_{k\in K^n_\tau}\big|X^{n,k}_{\tau'} - X^{n,k}_{\tau}\big|\wedge1\\
		 +
		\int_{(\tau,\tau']\x[0,\gammab] \x [0,1]}
		\sum_{k\in K^n_{s-}}
		\sum_{\ell \geq0}
		(\ell +1)\1_{I^n_{\ell} (s,X^{n,k}_s,\nu^n_s)\x[0,\gamma_n(s,X^{n,k}_s,\nu^n_s)]}(z)
		\,Q^k(ds,dz).
	\end{multline}
	The first term neglects potential death of particles, while the second term counts the total number of particles born or dead between $\tau$ and $\tau'.$ 
There is slight abuse of notation here as we need to extend the path of particles who have died before time $\tau'$ as solution to  SDE
	\begin{equation*}
	d X^{n,k}_s ~=~ b_n(s, X^{n,k}_s,\nu^n_s)\,ds+\sigma_n(s, X^{n,k}_s,\nu^n_s)\,d\Wk_s.
	\end{equation*}
	We start by dealing with the first term on the rhs of~\eqref{eq:aldous}.
    Notice that
    \begin{align*}
        &\E\Big[
            \sum_{k\in K^n_\tau}\big|
                X^{n,k}_{\tau'}-X^{n,k}_\tau
            \big|
        \Big]
        \leq
        \E\Big[
            \sum_{k\in K^n_\tau}
            \Big(\int_\tau^{\tau'}
                \big|b_n(s,X^{n,k}_s,\nu^n_s)\big|
            \,ds
        +
                \Big|
                    \int_\tau^{\tau'}
                        \sigma_n(s,X^{n,k}_s,\nu^n_s)
                    \,dW^k_s
                \Big|
            \Big)
        \Big].
    \end{align*}
    On the one hand, we have
    \begin{align*}
        \E\Big[
            \sum_{k\in K^n_\tau}\int_\tau^{\tau'}
                \big|b_n(s,X^{n,k}_s,\nu^n_s)\big|
            \,ds
        \Big]
        \leq
         C_b \E\Big[
            \sum_{k\in K^n_\tau}\int_\tau^{\tau'}
                \big(1 + \big|X_s^{n,k}\big|\big)
            \,ds
        \Big] 
         \leq
         C \delta\, \E\Big[
            \sum_{k\in K^n_\tau}
                \big(1 + \big|X_\tau^{n,k}\big|\big)
            \,ds
        \Big],
    \end{align*}
    where we used the classical estimate for diffusion $$\E\big[|X_s^{n,k}| \,|\, \Fc_\tau\big]\le C(1+|X_\tau^{n,k}|),\quad \text{ for all }\tau\le s\le T.$$
    On the other hand, it follows from Burkholder-Davis-Gundy inequality that
    \begin{align*}
        \E\Big[
            \sum_{k\in K^n_\tau}
                \Big|
                    \int_\tau^{\tau'}
                        \sigma_n(s,X^{n,k}_s,\nu^n_s)
                    dW^k_s
                \Big|
        \Big]
        & \leq
         C_1  \bar{\sigma} \sqrt{\delta}\, \E\big[\# K^n_\tau\big].
    \end{align*}
    As for the second term on the rhs of~\eqref{eq:aldous}, a direct computation yields
    \begin{multline}
        \E\bigg[
            \int_{(\tau,\tau']\x[0,\gammab] \x [0,1]}
                \sum_{k\in K^n_{s-}}
                    \sum_{\ell\geq0}
                        (\ell+1)\1_{I^n_\ell(s,X^{n,k}_s,\nu^n_s)\x[0,\gamma_n(s,X^{n,k}_s,\nu^n_s)]}(z)
            \,Q^k(ds,dz)
        \bigg]\nonumber\\
        \begin{aligned}
        &=
        \E\bigg[
            \int_\tau^{\tau'}
                \sum_{k\in K^n_s}\gamma_n(s,X^{n,k}_s,\nu^n_s)\sum_{\ell\geq0}(\ell+1)p^n_\ell(s,X^{n,k}_s,\nu^n_s)
            \,ds
        \bigg]\nonumber\\
        & \leq  
        \gammab (M_1+1) \delta\,\E\Big[
            \sup_{\tau\leq t\leq \tau'}\# K^n_t
        \Big].
        \end{aligned}
    \end{multline}
   It follows from the above estimates and Lemma~\ref{lemm:upperbound-bis} that
    \begin{align*}
        \E\big[
            d_E(Z^n_\tau,Z^n_{(\tau+\delta)\wedge T})
        \big]
        \leq 
        C\big(\delta + \sqrt{\delta}\big)\, \sup_{n\in\N} \E\big[\<\xi_n, 1 + |\cdot|^2\>\big].
    \end{align*}
    We deduce that the second condition \eqref{eq:Aldous2} holds by Markov's inequality.
    
   \noindent\emph{Step 2.}  Next consider an extracted subsequence, still denoted by $\tilde{\P}_n,$ converging to a limit point   $\tilde{\P}$ in $\Pc(D([0,T],E)).$ We aim to show that the marginal $\tilde{\P}_n\circ\tilde{Z}_t^{-1}$ also converges toward  $\tilde{\P}\circ\tilde{Z}_t^{-1}$ in $\Pc_1(E)$ (under the Wasserstein distance $\Wc_1$) for almost all $t\in[0,T].$  
   First we observe that the convergence occurs  in $\Pc(E)$ (under the weak convergence topology) outside of a countable subset of $[0,T]$ by property of the Skorokhod topology, see, \eg, Jacod and Shiryaev~\cite[Proposition~VI.3.14]{Jacod 2003}. 
Then it remains to prove that the sequence $(\# K_t^n)_{n\in\N}$ is uniformly integrable. This follows from the additional assumptions (i) and (ii) of Proposition~\ref{prop:stability}  which yield
\begin{equation*}
 \P\big(\# K_t^n \ge N\big) \le \P\big( \bar{N}_t \ge N\big), \quad \text{for all } N\ge 1, n\ge 0,
\end{equation*}
where $(\bar{N}_t)_{t\ge 0}$ is a branching process with branching rate $C\gammab$ and progeny distribution $(\bar{p}_\ell)_{\ell\ge 1}.$ Indeed, denote $N_t^n := \# K_t^n$ and observe that, by It\^o's formula,
    \begin{multline*}
        \1_{N^n_t\ge N} 
         =  \1_{N^n_0\ge N} +
         \int_{(0,t]\x[0,\gammab] \x [0,1]}
                \sum_{k\in K^n_{s-}}
                    \sum_{\ell\geq0}
                        \Big( \1_{N^n_{s-} + \ell-1 \ge N} -  \1_{N^n_{s-}\ge N} \Big) \\
                       \x \1_{I^n_\ell(s,X^{n,k}_s,\nu^n_s)\x[0,\gamma_n(s,X^{n,k}_s,\nu^n_s)]}(z)
            \,Q^k(ds,dz)
    \end{multline*}
    Taking expectation, we deduce that
        \begin{multline*}
        \P\big(N^n_t\ge N\big) \\
        \begin{aligned}
         & \le  \P\big(N^n_0\ge N\big) 
          + 
         \E \Big[ \int_0^t \sum_{k \in K_{s}} \gamma(s,X^{n,k}_s,\nu^n_s) \sum_{\ell \geq 1}
         p_\ell (s,X^{n,k}_s,\nu^n_s)
         \Big( \1_{N^n_{s-} + \ell-1 \ge N} -  \1_{N^n_{s-}\ge N} \Big) 
           \,ds
            \Big] \\
         & \le  \P\big(N^n_0\ge N\big) 
          + 
         C\bar{\gamma} \int_0^t  \sum_{\ell \geq 1}
          \bar{p}_\ell 
         \Big(  \P\big(N^n_s + \ell-1 \ge N\big) - \P\big(N^n_s\ge N\big) \Big) 
         \,ds.
         \end{aligned}
    \end{multline*}
    The conclusion now follows from the fact that  $\P(N^n_0\ge N) \le \P(\bar{N}_0\ge N)$ and
      \begin{equation*}
        \P\big(\bar{N}_t\ge N\big)
	    = 
	    \P\big(\bar{N}_0\ge N\big) 
          + 
         C\bar{\gamma} \int_0^t  \sum_{\ell \geq 1}
          \bar{p}_\ell 
         \Big(  \P\big(\bar{N}_s + \ell-1 \ge N\big) - \P\big(\bar{N}_s\ge N\big) \Big) 
         \,ds.
    \end{equation*}

   \noindent\emph{Step 3.} Next we identify any limit point of the sequence $(\Pt_n)$ as a solution to an appropriate martingale problem.  
	Given $\Phi\in C^2_b(\R,\R)$ and $\varphi = (\varphi^k)_{k\in \K} \in C^2_b(\K\x\R^d,\R),$ let us define on the canonical space the process
	\begin{align*}
		\tilde\Mc^{\Phi_\varphi}_t
		:=
		\Phi_{\varphi}(\Zt_t)-\int_0^t \Hc \Phi_{\varphi}(s, \Zt_s, \nu_s) \,ds, \quad t\in [0,T],
	\end{align*}			
	where $\nu_s = \Pt\circ\Zt^{-1}_s$ and
	\begin{multline*}
		\Hc \Phi_{\varphi}(s, \Zt_s, \nu_s)
		:=
		\frac{1}{2}\Phi_{\varphi}''(\Zt_s) \sum_{k\in \Kt_s} \big|D\varphi^k(\Xt^k_s) \sigma(s, \Xt^k_s, \nu_s)\big|^2
		+
		\Phi_{\varphi}'(\Zt_s) \sum_{k\in \Kt_s} \Lc \varphi^k(s, \Xt^k_s, \nu_s)\\
		 +
		\sum_{k\in \Kt_s}\gamma(s, \Xt^k_s,\nu_s)
		\Big(
			\sum_{\ell \ge 0} \Phi_{\varphi}\Big( \Zt_{s-} -\delta_{(k, \Xt^k_{s})}+\sum_{j = 1}^{\ell} \delta_{(kj, \Xt^{k}_{s} )} \Big) p_{\ell} (s, \Xt^k_s,\nu_s)-\Phi_{\varphi}(\Zt_{s-})
		\Big).
	\end{multline*}	
%
%
		We can then show by classical arguments that any limit point $\Pt$ of the sequence $(\Pt_n)$ solves the following martingale problem:
		\begin{itemize}
			\item[(i)] $\Pt \circ \tilde{Z}_0^{-1} = \Lc(\xi)$,
			
		
			\item[(ii)] $\tilde\Mc^{\Phi_\varphi}$ is a $(\Pt, \Ft)$--martingale for all $\Phi\in C^2_b(\R,\R)$ and $\varphi = (\varphi^k)_{k\in \K} \in C^2_b(\K\x\R^d,\R).$
		\end{itemize}
		Indeed, the first point comes immediately by passing to the limit in the relation $\Pt_n \circ \tilde{Z}_0^{-1} = \Lc(\xi_n)$ since the projection $\tilde{Z}_0$ is continuous for the Skorokhod topology, see, \eg, Billingsley~\cite[Theorem~12.5]{Billingsley 2013}. 
		Regarding the second point,  it comes by observing first that 
		 the process $\tilde\Mc^{\Phi_\varphi, n}$ defined as
	\begin{align}\label{eq:martingale}
		\tilde\Mc^{\Phi_\varphi, n}_t
		:=
		\Phi_{\varphi}(\Zt_t)-\int_0^t \Hc^n \Phi_{\varphi}(s, \Zt_s, \nu^n_s) ds, \quad t\in [0,T],
	\end{align}
	is a $(\Pt_n, \Ft)$--martingale by It\^o's formula, where $\Hc^n$ is defined as $\Hc$ with coefficients $(b_n,\sigma_n,\gamma_n,p_\ell^n)$ and $\nu^n_s = \Pt_n\circ\tilde{Z}_s^{-1} .$ Observe that $\nu^n_s$ converges to $\nu_s$ in $\Pc_1(E)$ for almost all $s\in[0,T]$ in view of Step~2 above.  This yields the convergence of $\Hc^n \Phi_{\varphi}(s, e, \nu^n_s)$ toward $\Hc \Phi_{\varphi}(s, e, \nu_s)$ for all $e\in E$ and almost all $s\in[0,T]$ by equicontinuity of $\nu\in\Pc_1(E)\mapsto\Hc^n \Phi_{\varphi}(s, e, \nu).$
    Then we can pass to the limit in the relation
	\begin{equation*}
     \E^{\Pt^n}\big[\big(\tilde\Mc^{\Phi_\varphi, n}_s-\tilde\Mc^{\Phi_\varphi, n}_t\big) h \big] = 0, \quad \text{for any $s\ge t$ and $h$ bounded continuous $\tilde{\Fc}_t$-measurable,}
	\end{equation*}
	This comes by similar arguments as~\cite[Lemma~4.12]{Claisse Ren Tan 2019} using the fact that there exists $C>0$ such that for all $n\in\N$ and $s\in[0,T],$
	\begin{equation*}
	\big| \Hc^n \Phi_{\varphi}(s, \Zt_s, \nu^n_s) \big|\le C \sup_{0\le t\le T} \sum_{k\in \Kt_t}\big( 1 + |\Xt_t^k| \big).
	\end{equation*}
	Together with Lemma~\ref{lemm:upperbound-bis}, it ensures the uniform integrability of the integral term in~\eqref{eq:martingale}.
   
   \noindent\emph{Step 4.} It remains to show that the solution to the limit martingale problem is unique given by $\P\circ Z^{-1}.$ Since $\Pt$ solves the martingale problem described in Step 3, it follows by arguments similar to~\cite[Proposition~2]{Claisse Kang Tan 2024} based upon Kurtz~\cite[Theorem 2.3]{Kurtz 2010} that we can construct on the (possibly enlarged) canonical space $\tilde \Om$ equipped with $\Pt,$ mutually independent Brownian motions $(\tilde W^k)_{k \in \K}$ and Poisson random measure $(\tilde Q^k)_{k \in \K}$,
	such that the canonical process $\Zt$ solves the analogous to SDE~\eqref{eq:SDE_McKean-VlasovB-bis} in this space.
   	Now it was established by \cite[Theorem~1]{Claisse Kang Tan 2024} that pathwise uniqueness holds for this class of SDEs.
   	Thus, by a Yamada-Watanabe-like theorem (see, \eg, Kurtz~\cite[Theorem~1.5]{Kurtz}), we deduce that uniqueness in distribution also holds and thus $\tilde \P$  coincides with $\P \circ Z^{-1}.$
\end{proof}

The stability property above also induces the convergence of the marginal measure $\mu^n_t=\pi(\nu^n_t)$ with $\pi$ defined by~\eqref{definition h}. This is what we really need in the proof of the invariance principle in Section~\ref{sec:marginal}. 

\begin{corollary}\label{cor:stability}
 Under the assumptions of Proposition~\ref{prop:stability},  the sequence of marginal measure $\mu^n_t = \pi(\P\circ (Z^n_t)^{-1})$ converges weakly to $\mu_t=\pi(\P\circ Z_t^{-1})$ in $\Mc(\R^d)$ for almost all $t\in [0,T].$
\end{corollary}

\begin{proof}
By Step 2 in the proof of Proposition~\ref{prop:stability}, it comes that the marginal $\P\circ (Z^{n}_t)^{-1}$ converges to $\P\circ Z_t^{-1}$ in $\Pc_1(E)$ for almost all $t\in[0,T].$ We can then conclude by continuity of the projection $\pi$ 
established in Lemma~\ref{lemma continuity}.
\end{proof}

\begin{remark} \rm
 Another way to enforce the convergence of $\mu^n_t$ to $\mu_t$ as in Corollary~\ref{cor:stability} is to replace Assumptions~(i) and (ii) in Proposition~\ref{prop:stability} as follows: there exists $\varepsilon>0$ such that 
 \begin{equation*}
 \sup_{n\in\N} \Big\|\sum_{\ell\ge 0}\ell^{1+\varepsilon} p_\ell^n\Big\|_{\infty}<+\infty \quad \text{and} \quad \sup_{n\in\N}\E\big[\<\xi_n, 1\>^{1+\varepsilon}\big]<+\infty.
 \end{equation*}
 Then we can recover the desired uniform integrability in Step 2 of the proof by an extension of~\eqref{ineq:supK-bis} stating that
 \begin{equation*}
 \E\Big[\big(\# K^n_t\big)^{1+\varepsilon}\Big] \le \E\big[\<\xi_n, 1\>^{1+\varepsilon}\big] e^{C T}, \quad \text{with } C = (1+\varepsilon) 2^{\varepsilon+1} \gammab \Big\|\sum_{\ell\ge 0}\ell^{1+\varepsilon} p_\ell^n\Big\|_{\infty}.
 \end{equation*}
\end{remark}

\end{document}